\newcounter{lemma}[section]
\newcounter{corollary}[section]
\newcounter{rem}[section]
\newcounter{theorem}[section]
\newcounter{proposition}[section]
\numberwithin{equation}{section}
\begin{document}

\markboth{\centerline{R. SALIMOV AND E. SEVOST'YANOV}}
{\centerline{THE POLETSKII...}}

\def\cc{\setcounter{equation}{0}
\setcounter{figure}{0}\setcounter{table}{0}}

\overfullrule=0pt


\author{{R. SALIMOV AND E. SEVOST'YANOV}\\}

\title{
{\bf THE POLETSKII AND V\"{A}IS\"{A}L\"{A} INEQUALITIES FOR THE
MAPPINGS WITH $(P, Q)$--DISTORTION}}

\date{\today}
\maketitle

\begin{abstract} The present paper is devoted to the study of
space mappings which are more general than quasiregular. Some
modulus inequalities for this class of mappings are obtained. In
particular, analogs of the well--known Poletskii and
V\"{a}is\"{a}l\"{a} inequalities were proved.
\end{abstract}

\bigskip
{\bf 2010 Mathematics Subject Classification: Primary 30C65;
Secondary 30C62}

\section{Introduction}

The present paper is devoted to the study of quasiconformal mappings
and their generalizations, such as mappings with finite distortion
intensively investigated last time, see 
\cite{BGMV}, \cite{BGR}, \cite{Cr$_1$}--\cite{Cr$_2$},
\cite{Gol$_1$}--\cite{Gol$_2$}, \cite{IM}, \cite{IR},
 \cite{KO}, 
\cite{MRSY$_1$}--\cite{MRSY$_2$}, \cite{Mikl}, \cite{Pol},
\cite{Re$_1$}--\cite{Re$_2$}, \cite{Ri}, 
\cite{RSY$_1$}--\cite{RSY$_2$}, \cite{UV},
\cite{Va$_1$}--\cite{Va$_2$}.

Let us give some definitions. Everywhere below, $D$ is a domain in
${\Bbb R}^n,$ $n\ge 2,$ $m$ is the Lebesgue measure in ${\Bbb R}^n,$
$m(A)$ the Lebesgue measure of a measurable set $A\subset {\Bbb
R}^n,$ $m_1$ is the linear Lebesgue measure in ${\Bbb R}.$   A
mapping $f:D\rightarrow {\Bbb R}^n$ is {\it discrete} if $f^{-1}(y)$
consists of isolated points for each $y\in{\Bbb R}^n,$ and $f$ is
{\it open} if it maps open sets onto open sets. The notation
$f:D\rightarrow {\Bbb R}^n$ assumes that $f$ is continuous. In what
follows, a mapping $f$ is supposed to be orientation preserving,
i.e., the topological index $µ(y, f,G)>0$ for an arbitrary domain
$G\subset D$ such that $\overline{G}\subset D$ and $y\in
f(G)\setminus f(\partial G),$ see e.g. II.2 in \cite{Re$_2$}. Let
$f:D\rightarrow {\Bbb R}^n$ be a mapping and suppose that there is a
domain $G\subset D,$ $\overline{G}\subset D,$ for which $
f^{\,-1}\left(f(x)\right)=\left\{x\right\}.$ Then the quantity
$\mu(f(x), f, G),$ which is referred to as the local topological
index, does not depend on the choice of the domain $G$ and is
denoted by $i(x, f).$ Given a mapping $f:D\rightarrow {\Bbb R}^n,$ a
set $E\subset D$ and a point $y\in {\Bbb R}^n,$ we define the
multiplicity function $N(y, f, E)$ as the number of pre-images of
$y$ in $E,$ i.e., $$N(y, f, E) = {\rm card}\,\left\{x \in E: f(x) =
y\right\}\,$$
and
$$N(f,E)=\sup\limits_{y\in{\Bbb
R}^n}\,N(y,f,E)\,.$$

Recall that a mapping $f:D\rightarrow {\Bbb R}^n$ is said to have
{\it $N$--pro\-per\-ty (by Luzin)} if
$m\left(f\left(S\right)\right)=0$ whenever $m(S)=0$ for
$S\subset{\Bbb R}^n.$ Similarly, $f$ has the {\it
$N^{-1}$--pro\-per\-ty} if $m\left(f^{\,-1}(S)\right)=0$ whenever
$m(S)=0.$

A curve $\gamma$ in ${\Bbb R}^n$ is a continuous mapping $\gamma
:\Delta\rightarrow{\Bbb R}^n$ where $\Delta$ is an interval in
${\Bbb R} .$ Its locus $\gamma(\Delta)$ is denoted by $|\gamma|.$
Given a family $\Gamma$ of curves $\gamma$ in ${\Bbb R}^n ,$ a Borel
function $\rho:{\Bbb R}^n \rightarrow [0,\infty]$ is called {\it
admissible} for $\Gamma ,$ abbr. $\rho \in {\rm adm}\, \Gamma ,$ if
$$\int\limits_{\gamma} \rho(x)|dx| \ge 1$$ for each (locally
rectifiable) $\gamma\in\Gamma.$ Given $p\ge 1,$ the {\it
$p$--mo\-du\-lus} of $\Gamma$ is defined as the quantity
$$M_p(\Gamma):=\inf\limits_{ \rho \in {\rm adm}\, \Gamma}
\int\limits_{{\Bbb R}^n} \rho^p(x) dm(x)$$ interpreted as $+\infty$
if ${\rm adm}\, \Gamma = \varnothing .$ Note that
$M_p(\varnothing)=0;$ $M_p(\Gamma_1)\le M_p(\Gamma_2)$ whenever
$\Gamma_1\subset\Gamma_2,$ and
$M_p\left(\bigcup\limits_{i=1}^{\infty}\Gamma_i\right)\le
\sum\limits_{i=1}^{\infty}M_p(\Gamma_i),$ see Theorem 6.2 in
\cite{Va$_1$}.

We say that a property $P$ holds for {\it $p$--almost every
($p$--a.e.)} curves $\gamma$ in a family $\Gamma$ if the subfamily
of all curves in $\Gamma $, for which $P$ fails, has $p$--mo\-du\-lus
zero.

If $\gamma :\Delta\rightarrow{\Bbb R}^n$ is a locally rectifiable
curve, then there is the unique nondecreasing length function
$l_{\gamma}$ of $\Delta$ onto a length interval $\Delta
_{\gamma}\subset{\Bbb R}$ with a prescribed normalization $l
_{\gamma}(t_0)=0\in\Delta _{\gamma},$ $t_0\in\Delta,$ such that $l
_{\gamma}(t)$ is equal to the length of the subcurve $\gamma
|_{[t_0,t]}$ of $\gamma$ if $t>t_0,$ $t\in\Delta ,$ and $l
_{\gamma}(t)$ is equal to minus length of $\gamma |_{[t,t_0]}$ if
$t<t_0,$ $t\in\Delta .$ Let $g: |\gamma |\rightarrow{\Bbb R}^n$ be a
continuous mapping, and suppose that the curve $\widetilde{\gamma}
=g\circ\gamma$ is also locally rectifiable. Then there is a unique
non--decreasing function $L_{\gamma ,g}: \Delta
_{\gamma}\rightarrow\Delta _{\widetilde{\gamma}}$ such that
$L_{\gamma ,g}\left(l_{\gamma}(t)\right) =
l_{\widetilde{\gamma}}(t)$ for all $t\in\Delta.$ A curve $\gamma$ in
$D$ is called here a (whole) {\it lifting} of a curve
$\widetilde{\gamma}$ in ${\Bbb R}^n$ under $f:D\rightarrow {\Bbb
R}^n$ if $\widetilde{\gamma} = f\circ\gamma.$

\medskip
We say that a mapping $f:D\rightarrow{\Bbb R}^n$ satisfies the {\it
$L$--pro\-pe\-r\-ty with respect to $(p, q)$-- mo\-du\-lus}, iff the
following two conditions hold:

$\left(L^{(1)}_p\right):$ for $p$--a.e. curve $\gamma$ in $D,$
$\widetilde{\gamma}=f\circ\gamma$ is locally rectifiable and the
function $L_{\gamma ,f}$ has the $N$--pro\-per\-ty;

$\left(L^{(2)}_q\right):$ for $q$--a.e. curve $\widetilde{\gamma}$
in $f(D),$ each lifting $\gamma$ of $\widetilde{\gamma}$ is locally
rectifiable and the function $L_{\gamma ,f}$ has the
$N^{-1}$--pro\-per\-ty.

The notation $(p,q)$-modulus mentioned above means that the $L^{(1)}_p$ and $L^{(2)}_q$ properties hold with respect to $p$ and $q$-moduli, respectively.

\medskip
A mapping $f:D\rightarrow{\Bbb R}^n$ is called a mapping with {\it
finite length $(p, q)$--dis\-tor\-tion}, if $f$ is differentiable
a.e. in $D,$ has $N$-- and $N^{\,-1}$--pro\-per\-ties, and
$L$--pro\-pe\-r\-ty with respect to $(p, q)$-- mo\-du\-lus. The
mappings of finite length $(p, q)$--dis\-tor\-tion are natural
generalization of the mappings with finite length distortion
introduced in the paper \cite{MRSY$_1$}, see also monograph
\cite{MRSY$_2$}.

Set at points $x\in D$ of differentiability of $f$
$$l\left(f^{\,\prime}(x)\right)\,=\,\min\limits_{h\in {\Bbb
R}^n \backslash \{0\}} \frac {|f^{\,\prime}(x)h|}{|h|}\,,
\Vert f^{\,\prime}(x)\Vert\,=\,\max\limits_{h\in {\Bbb R}^n
\backslash \{0\}} \frac {|f^{\,\prime}(x)h|}{|h|}\,, J(x,f)=det f'(x),$$
and define for any $x\in D$ and fixed $p,q$, $p,q\geq 1$
$$K_{I, q}(x,f)\quad =\quad\left\{
\begin{array}{rr}
\frac{|J(x,f)|}{{l\left(f^{\,\prime}(x)\right)}^q}, & J(x,f)\ne 0,\\
1,  &  f^{\,\prime}(x)=0, \\
\infty, & {\rm otherwise}
\end{array}
\right.\,,$$
$$K_{O, p}(x,f)\quad =\quad \left\{
\begin{array}{rr}
\frac{\Vert f^\prime(x)\Vert^p}{|J(x,f)|}, & J(x,f)\ne 0,\\
1,  &  f^{\,\prime}(x)=0, \\
\infty, & {\rm otherwise}
\end{array}
\right.\,.$$

\medskip
One of the main results proved in the paper is following.

\medskip
\begin{theorem}\label{th1A} {\sl\, A mapping $f:D\rightarrow {\Bbb R}^n$ with finite length
$(p, q)$--dis\-tor\-ti\-on satisfies the inequalities
\begin{equation}\label{eq5*AA}
M_q(f(\Gamma))\le \int\limits_D K_{I, q}(x,f)\cdot\rho^q(x)\, dm(x)
\end{equation}
for every family of curves $\Gamma$ in $D$ and $\rho\in {\rm
adm\,}\Gamma,$ and
\begin{equation}\label{eq5.4AA}
M_p(\Gamma)\le \int\limits_{f(E)}K_{I,p}\left(y, f^{\,-1},
E\right)\cdot\rho_*^p(y)dm(y) \end{equation} for every measurable
set $E\subset D,$ every family $\Gamma$ of curves  $\gamma$ in $E$
and every function $\rho_*(y)\in {\rm adm\,}f(\Gamma),$ where
\begin{equation}\label{eq1}
K_{I, p}\left(y, f^{\,-1}, E\right):=\sum\limits_{x\in E\cap
f^{\,-1}(y)}K_{O, p} (x,f)\,.\end{equation} }
\end{theorem}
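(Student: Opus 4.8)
The plan is to prove the two modulus inequalities separately, since they are essentially dual to each other under the correspondence $f \leftrightarrow f^{-1}$, and both follow from the classical change-of-variables / length-area machinery for the mappings with finite length $(p,q)$-distortion. I would begin by recalling the pointwise infinitesimal distortion estimates at a point $x$ where $f$ is differentiable with $J(x,f)\neq 0$: for a tangent direction the image length element satisfies $l(f'(x))\,|dx| \le |df(x)| \le \|f'(x)\|\,|dx|$, while the volume element transforms by $|J(x,f)|$. These are the two facts that convert the integrals over $D$ into integrals along curves and back.

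For the first inequality \eqref{eq5*AA}, I would fix $\rho \in \mathrm{adm}\,\Gamma$ and construct an admissible function $\widetilde{\rho}$ for $f(\Gamma)$. The natural candidate is
\[
\widetilde{\rho}(y) \;=\; \sup_{x \in f^{-1}(y)} \frac{\rho(x)}{l(f'(x))}
\]
(with the appropriate conventions at critical points). Using property $\left(L^{(1)}_p\right)$ one shows that for $p$-a.e.\ curve $\gamma$ the image $\widetilde{\gamma}=f\circ\gamma$ is locally rectifiable and $L_{\gamma,f}$ has the $N$-property, which is exactly what is needed so that the length integral of $\rho$ along $\gamma$ dominates the length integral of $\widetilde{\rho}$ along $\widetilde{\gamma}$; this verifies $\widetilde{\rho}\in\mathrm{adm}\,f(\Gamma)$ up to a negligible subfamily. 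Then I would estimate $M_q(f(\Gamma)) \le \int_{f(D)} \widetilde{\rho}^{\,q}(y)\,dm(y)$, and pull this integral back to $D$ by the change-of-variables formula; the $N$- and $N^{-1}$-properties guarantee the transformation behaves correctly and the Jacobian factor $|J(x,f)|$ appears, combining with $l(f'(x))^{-q}$ to produce exactly $K_{I,q}(x,f)\,\rho^q(x)$ in the integrand.

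For the second inequality \eqref{eq5.4AA}, I would run the symmetric argument for the inverse correspondence. Here one fixes $\rho_*\in\mathrm{adm}\,f(\Gamma)$ on the image side and builds an admissible density for $\Gamma$ on $E$; property $\left(L^{(2)}_q\right)$ supplies the liftings of $q$-a.e.\ curve in $f(D)$ together with the $N^{-1}$-property of $L_{\gamma,f}$, which plays the role that $\left(L^{(1)}_p\right)$ played above. The summation in \eqref{eq1} over the fibre $E\cap f^{-1}(y)$ is the manifestation of the fact that a single image curve may have several liftings, so each sheet of $f$ over $y$ contributes its own $K_{O,p}(x,f)$ term; the change of variables is organised by decomposing $E$ into Borel pieces on which $f$ is injective (a Federer-type or Rado--Reichelderfer decomposition) and summing.

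The main obstacle will be the careful handling of the exceptional curve families and the rigorous passage between the curve-integral estimate and the area integral. Specifically, verifying that the constructed density is genuinely admissible requires controlling the subfamily of curves on which the $L$-property fails and showing it has $p$- (resp.\ $q$-) modulus zero, so that modifying $\widetilde{\rho}$ on a modulus-zero family does not change $M_q(f(\Gamma))$. This is where properties $\left(L^{(1)}_p\right)$ and $\left(L^{(2)}_q\right)$ do the essential work, and where one must invoke the absolute continuity on curves encoded by the $N$- and $N^{-1}$-properties of $L_{\gamma,f}$; the rest of the argument is the standard, if technical, Lebesgue change-of-variables computation that identifies the integrand with $K_{I,q}$ and $K_{I,p}(\cdot,f^{-1},E)$ respectively.
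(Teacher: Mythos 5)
Your overall architecture is the same as the paper's: the theorem is split into the two inequalities, each proved by constructing an explicit admissible density (the push-forward $\sup_{x\in f^{-1}(y)}\rho(x)/l(f'(x))$ for (\ref{eq5*AA}), the pull-back $\rho_*(f(x))\Vert f'(x)\Vert$ for (\ref{eq5.4AA})), followed by a Federer-type decomposition of $D$ into Borel pieces $B_k$ on which $f$ is injective and bi-Lipschitz and a change of variables on each piece. However, you have interchanged the roles of the two $L$-properties, and this is a genuine gap, not a notational slip. For the Poletskii-type inequality (\ref{eq5*AA}) you must show that $\int_{\widetilde{\gamma}}\widetilde{\rho}\,|dy|\ge 1$ for $q$-a.e.\ curve $\widetilde{\gamma}=f\circ\gamma$ in $f(\Gamma)$; the chain of estimates runs through the $f$-representation $\gamma^{\,*}$ of the lifting, parametrized by the arclength of the \emph{image} curve, and the crucial identity $\int_{\gamma}\rho\,|dx|=\int_0^{l(\widetilde{\gamma})}\rho(\gamma^{\,*}(s))|\gamma^{\,*\prime}(s)|\,ds$ requires $\gamma^{\,*}$ to be rectifiable and absolutely continuous. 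By Proposition \ref{pr3}$(i_2)$ this is exactly the $N^{-1}$-property of $L_{\gamma,f}$, i.e.\ the $\left(L^{(2)}_q\right)$-property --- not $\left(L^{(1)}_p\right)$ as you claim. The point is that the exceptional curve family you must discard lives in $f(D)$ and must have $q$-modulus zero, since it is $M_q(f(\Gamma))$ that you are bounding; the $\left(L^{(1)}_p\right)$-property only gives you a $p$-modulus-zero family of curves in $D$, and there is no way to convert that into a $q$-negligible family of image curves without already possessing a Poletskii-type inequality, so your version of the argument is circular.

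Symmetrically, for (\ref{eq5.4AA}) the admissibility of $\rho(x)=\rho_*(f(x))\Vert f'(x)\Vert$ rests on the inequality $\int_{\gamma}\rho_*(f(x))\Vert f'(x)\Vert\,|dx|\ge\int_{f\circ\gamma}\rho_*\,|dy|$, which requires $f\circ\gamma^{\,0}$ to be rectifiable and absolutely continuous for $p$-a.e.\ $\gamma$ in $D$; by Proposition \ref{pr3}$(i_1)$ this is the $N$-property of $L_{\gamma,f}$, i.e.\ $\left(L^{(1)}_p\right)$, and here the exceptional family correctly sits in $D$ with $p$-modulus zero. (You also state the domination in the wrong direction for (\ref{eq5*AA}) --- admissibility of $\widetilde{\rho}$ needs the integral of $\widetilde{\rho}$ over $\widetilde{\gamma}$ to dominate that of $\rho$ over $\gamma$, not the reverse --- but that is minor compared with the swap of hypotheses.) Since the full hypothesis of the theorem contains both properties, the conclusion is not endangered, but as written your plan assigns to each inequality the one hypothesis that cannot do the required work; correcting the assignment recovers exactly the paper's proofs of Theorems \ref{th1} and \ref{th2}.
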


Remark that an analog of the Theorem \ref{th1A} for $p=q=n$ was
proved in \cite{MRSY$_2$}, see Theorems 8.5 and 8.6 (cf. \cite{MRSY$_1$}).

\section{The proof of the main results}
\setcounter{equation}{0}

Further, we use the notation $I$ for the segment $[a,b].$ Given a
closed rectifiable path $\gamma:I\rightarrow {\Bbb R}^n,$ we define
a length function $l_{\gamma}(t)$ by the rule
$l_{\gamma}(t)=S\left(\gamma, [a,t]\right),$ where $S(\gamma,
[a,t])$ is the length of the path $\gamma|_{[a, t]}.$ Let
$\alpha:[a,b]\rightarrow {\Bbb R}^n$ be a rectifiable curve in
${\Bbb R}^n,$ $n\ge 2,$ and $l(\alpha)$ be its length. A {\it normal
representation} $\alpha^0$ of $\alpha$ is defined as a curve
$\alpha^0:[0, l(\alpha)]\rightarrow {\Bbb R}^n$ which can be got
from $\alpha$ by change of parameter such that
$\alpha(t)=\alpha^0\left(S\left(\alpha, [a, t]\right)\right)$ for
every $t\in [0, l(\alpha)].$

\medskip
Suppose that $\alpha$ and $\beta$ are curves in ${\Bbb R}^n.$ Then the
notation $\alpha\subset\beta$ denotes that $\alpha$ is a subpath of
$\beta.$ In what follows, $I$ denotes either an open or  closed or
semi--open interval on the real axes. The following definition can
be found in the section 5 of Ch. II in \cite{Ri}.

\medskip
Let $f:D\rightarrow {\Bbb R}^n$ be a mapping such that $f^{-1}(y)$
does not contain a non--degenerate curve, $\beta:I_0\rightarrow
{\Bbb R}^n$ be a closed rectifiable curve and $\alpha:I\rightarrow
D$ be such that $f\circ \alpha\subset \beta.$ If the length function
$l_{\beta}:I_0\rightarrow [0, l(\beta)]$ is a constant on $J\subset
I,$ then $\beta$ is a constant on $J$ and, consequently, the curve
$\alpha$ to be a constant on $J.$ Thus, there is a unique function
$\alpha^{\,*}:l_\beta(I)\rightarrow D$ such that
$\alpha=\alpha^{\,*}\circ (l_\beta|_I).$ We say that $\alpha^{\,*}$
to be a {\it $f$--re\-pre\-se\-n\-ta\-tion of $\alpha$ with respect
to $\beta$ } if $\beta=f\circ\alpha.$

\medskip
\begin{rem}\label{rem1}
Given a closed rectifiable curve $\gamma:[a, b]\rightarrow {\Bbb
R}^n$ and $t_0\in (a, b),$ let $l_{\gamma}(t)$ denote the length of
the subcurve $\gamma |_{[t_0,t]}$ of $\gamma$ if $t>t_0,$ $t\in (a,
b),$ and $l_{\gamma}(t)$ is equal to the length of $\gamma |_{[t,t_0]}$
with the sign $"$$-$$"$ if $t<t_0,$ $t\in (a, b).$ Then we observe
that properties of the $L_{\gamma, f}$ connected with the length
functions $l_{\gamma}(t)$ and $l_{\widetilde{\gamma}}(t),$
$\widetilde{\gamma}=f\circ\gamma,$ do not essentially depend on the
choice of $t_0\in (a, b).$ Moreover, we may consider that in this
case $t_0=a$ because given $t_0\in (a, b),$ $S(\gamma, [a,
t])=S(\gamma, [a, t_0])+l_{\gamma}(t).$ Hence further we choose
$t_0=a$ and use the notion $l_{\gamma}(t)$ for the length of the
path $\gamma|_{[a, t]}$ whenever a curve $\gamma$ is closed.
\end{rem}

\medskip
The following statement gives the connection between $L^{(1)}_p$--
and $L^{(2)}_q$--pro\-per\-ti\-es and some properties of curves
meaning above.

\medskip
\begin{proposition}\label{pr3}
{\sl\, $(i_1)\quad$ A mapping $f:D\rightarrow {\Bbb R}^n$ has
$L^{(1)}_p$--pro\-per\-ty if and only if the curve $f\circ\gamma^0$
is rectifiable and absolutely continuous for $p$--a.e. closed curve
$\gamma;$ $(i_2)\quad$ a mapping $f:D\rightarrow {\Bbb R}^n$ has
$L^{(2)}_q$--pro\-per\-ty if and only if $f^{-1}(y)$ does not
contain a nondegenerate curve for every $y\in {\Bbb R}^n,$ and the
$f$--representation $\gamma^{\,*}$ is rectifiable and absolutely
continuous for $q$--a.e. closed curve
$\widetilde{\gamma}=f\circ\gamma.$}
\end{proposition}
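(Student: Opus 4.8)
The plan is to reduce both parts to the classical Banach--Zaretsky theorem by identifying the function \(L_{\gamma,f}\), respectively its inverse, with a genuine length function of a reparametrized curve. First I would reduce from arbitrary locally rectifiable curves to closed rectifiable ones: every locally rectifiable curve is a countable union of closed rectifiable subcurves, and since the \(p\)-modulus is countably subadditive (one of the properties of \(M_p\) recalled in the introduction), a property holds for \(p\)-a.e. curve precisely when it holds for \(p\)-a.e. closed curve. Remark~\ref{rem1} then lets me normalize the length function of a closed curve so that \(l_{\gamma}(t)\) is the length of \(\gamma|_{[a,t]}\), which is what the definitions of \(\gamma^{0}\) and \(\gamma^{\,*}\) presuppose.

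For \(\left(i_1\right)\), the crucial observation is that, writing \(\gamma^{0}\) for the arc-length (normal) representation of a closed rectifiable \(\gamma\), one has \((f\circ\gamma^{0})\bigl(l_{\gamma}(t)\bigr)=f(\gamma(t))=\widetilde{\gamma}(t)\), so the length function of the image curve satisfies \(l_{f\circ\gamma^{0}}\bigl(l_{\gamma}(t)\bigr)=l_{\widetilde{\gamma}}(t)=L_{\gamma,f}\bigl(l_{\gamma}(t)\bigr)\). Hence \(L_{\gamma,f}\) is exactly the length function of \(f\circ\gamma^{0}\). Being the length function of the continuous curve \(f\circ\gamma^{0}\), it is continuous and non-decreasing, and \(f\circ\gamma^{0}\) is rectifiable exactly when \(L_{\gamma,f}\) is finite, hence of bounded variation. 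By the Banach--Zaretsky theorem a continuous function of bounded variation is absolutely continuous if and only if it enjoys the Luzin \(N\)-property, and the absolute continuity of the length function of a curve is equivalent to the absolute continuity of the curve itself. Chaining these equivalences shows that \(L_{\gamma,f}\) has the \(N\)-property if and only if \(f\circ\gamma^{0}\) is rectifiable and absolutely continuous, which is \(\left(i_1\right)\).

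For \(\left(i_2\right)\), the hypothesis that \(f^{-1}(y)\) contains no nondegenerate curve is exactly what makes the \(f\)-representation \(\gamma^{\,*}\) well defined, as in the paragraph preceding Remark~\ref{rem1}, and \(\gamma^{\,*}\) is the reparametrization of \(\gamma\) by the arc length \(\sigma=l_{\widetilde{\gamma}}(t)\) of \(\widetilde{\gamma}=f\circ\gamma\). Running the previous computation in reverse shows that the length function of \(\gamma^{\,*}\) equals the generalized inverse \(L_{\gamma,f}^{-1}\). Since \(L_{\gamma,f}^{-1}\) has the \(N\)-property if and only if \(L_{\gamma,f}\) has the \(N^{-1}\)-property, the same Banach--Zaretsky argument gives that \(\gamma^{\,*}\) is rectifiable and absolutely continuous if and only if \(L_{\gamma,f}\) has the \(N^{-1}\)-property; transferring back to \(q\)-a.e. curves as above completes the sufficiency direction.

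The hard part will be the necessity of the no-nondegenerate-fiber condition in \(\left(i_2\right)\). I expect to argue that if some fiber \(f^{-1}(y_0)\) contained a nondegenerate rectifiable curve \(\sigma\), then any lifting traversing \(\sigma\) would cover a positive length in the source while \(\widetilde{\gamma}\) rests at \(y_0\) (zero length in the target), so a single target value would have a preimage of positive measure under \(L_{\gamma,f}\), directly violating the \(N^{-1}\)-property. The genuine obstacle is to promote this from one offending lifting to a \emph{positive-\(q\)-modulus} family of curves \(\widetilde{\gamma}\) through \(y_0\) whose liftings run along \(\sigma\); this requires a lower modulus estimate for the family of curves meeting the nondegenerate continuum \(|\sigma|\), for which I would invoke the lifting constructions for discrete open mappings in Section~5 of Ch.~II in \cite{Ri}.
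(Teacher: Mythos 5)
Your treatment of $(i_1)$ and of the sufficiency half of $(i_2)$ is essentially the paper's own argument: you identify $L_{\gamma,f}$ with the length function $l_{f\circ\gamma^{0}}$ (respectively $L_{\gamma,f}^{-1}$ with $l_{\gamma^{*}}$), convert the Luzin $N$- (resp.\ $N^{-1}$-) property of this monotone continuous function into its absolute continuity (the paper cites Theorem 2.10.13 in \cite{Fe} for exactly this Banach--Zaretsky step), transfer absolute continuity between the length function and the curve through the $1$-Lipschitz normal representation, and pass from closed to general curves. One small correction there: the passage to general curves is by minorization ($\Gamma>\Gamma_1$ implies $M_p(\Gamma)\le M_p(\Gamma_1)$), not by countable subadditivity; a locally rectifiable curve on which the property fails has a closed subcurve on which it fails, and that is all that is needed.

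The genuine gap is in what you call the hard part, the necessity of the condition that no fiber $f^{-1}(y_0)$ contains a nondegenerate curve, and your proposed route would not work. First, $f$ is not assumed discrete or open in this proposition, so the lifting machinery of Section 5 of Ch.\ II in \cite{Ri} is unavailable. Second, and more fundamentally, for $q\le n$ every family of nonconstant curves passing through the single point $y_0$ has $q$-modulus zero (a point has zero $q$-capacity in ${\Bbb R}^n$), so the positive-$q$-modulus family of nonconstant curves through $y_0$ that you want to construct does not exist, and your intended contradiction with ``$q$-a.e.'' evaporates precisely in the main cases $q\le n$. The observation that rescues, and in fact trivializes, this step is that the exceptional family in the definition of $L^{(2)}_q$ must have $q$-modulus zero, whereas the singleton family consisting of the constant curve $\widetilde{\gamma}\equiv y_0$ has empty admissible class and hence infinite $q$-modulus; therefore $L^{(2)}_q$ must hold for this constant curve. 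A nondegenerate curve $\sigma$ in $f^{-1}(y_0)$ is a lifting of it: either $\sigma$ is not locally rectifiable, contradicting the first clause of $L^{(2)}_q$, or $\sigma$ has a closed nondegenerate rectifiable subcurve on which $L_{\sigma,f}\equiv 0$ sends an interval of positive length onto the null set $\{0\}$, contradicting the $N^{-1}$-property. This is the one-line justification the paper leaves implicit in its sentence ``we observe that $f^{-1}(y)$ does not contain a nondegenerate curve \dots because $L_{\gamma,f}$ has $N^{-1}$-property for $q$-a.e.\ closed curve.''
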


\begin{proof}$(i_1)$ First assume that $f$ has $L^{(1)}_p$--pro\-per\-ty.
Then for $p$--a.e. curve $\gamma$ the curve $f\circ \gamma$ is
locally rectifiable and, thus, $f\circ \gamma^0$ is rectifiable for
$p$--a.e. closed curve $\gamma$ because
$\left(f\circ\gamma^{0}\right)^0=\left(f\circ\gamma\right)^0,$ see
Theorem 2.6 in \cite{Va$_1$}. By $L^{(1)}_p$--pro\-per\-ty
$L_{\gamma, f}$ has $N$--pro\-per\-ty for $p$--a.e. curve $\gamma$
in $D$ that is equivalent to absolute continuity of $L_{\gamma, f},$
see Theorem 2.10.13 in \cite{Fe}. By definition of the length
function, we have that $\alpha(t)=\alpha^{\,0}\circ l_{\alpha}(t)$
for each locally rectifiable curve $\alpha$ in ${\Bbb R}^n.$ Thus,
in particular, for $\alpha=f\circ\gamma^{\,0},$ we obtain
$$f\circ\gamma^{\,0}(s)=\left(f\circ\gamma^{\,0}\right)^{\,0}\circ
l_{f\circ\gamma^{\,0}}(s)=\left(f\circ\gamma\right)^{\,0}\circ
l_{f\circ\gamma^{\,0}}(s)=\left(f\circ\gamma\right)^{\,0}\circ
L_{\gamma, f}(s)\,\quad \forall\quad s\in [0, l(\gamma)]\,.$$
Since $L_{\gamma, f}$ is absolutely continuous and
$|\left(f\circ\gamma\right)^0(s_1)-\left(f\circ\gamma\right)^0(s_2)|\le
|s_1-s_2|$ for every $s_1, s_2\in [0, l(f\circ \gamma)],$ the curve
$f\circ\gamma^{\,0}$ is absolutely continuous for $p$--a.e. curve
$\gamma.$

Inversely, let us assume that the curve $f\circ\gamma^0$ is
rectifiable and absolutely continuous for $p$--a.e. closed curve
$\gamma.$ Note that $L_{\gamma, f}=l_{f\circ \gamma^{\,0}}$ for such
$\gamma.$ Let $\Gamma_1$ be a family of all closed curves $\alpha$
in $D$ such that $f\circ\alpha$ either is not rectifiable or
$L_{\alpha, f}$ is not absolutely continuous. Let $\Gamma$ be a
family of all curves $\gamma$ in $D$ such that $f\circ\gamma$ either
is not locally rectifiable or $L_{\gamma, f}$ is not locally
absolutely continuous. Then $\Gamma>\Gamma_1$ and, thus,
$M_p(\Gamma)\le M_p(\Gamma_1),$ i.e., $M_p(\Gamma)=0.$

$(i_2)$ Now, let us assume that $f$ has $L^{(2)}_q$--pro\-per\-ty.
Then $\gamma^{\,*}$ is rectifiable for $q$--a.e. closed curve
$\widetilde{\gamma}$ whenever $\widetilde{\gamma}=f\circ \gamma$
because $(\gamma^{\,*})^{\,0}=\gamma^{\,0},$ see Theorem 2.6 in
\cite{Va$_1$}. Moreover, we observe that $f^{-1}(y)$ does not
contain a nondegenerate curve for every $y\in {\Bbb R}^n$ because
$L_{\gamma, f}$ has $N^{\,-1}$--pro\-per\-ty for $q$--a.e. closed
curve $\widetilde{\gamma}$ and all $\gamma$ with
$\widetilde{\gamma}=f\circ \gamma.$ Thus, $L_{\gamma, f}^{\,-1}$ is
well--defined and, for such $\gamma$ and $\widetilde{\gamma},$ we
have
$$\gamma^{\,*}\circ l_{\widetilde{\gamma}}(t)=\gamma(t)=\gamma^{\,0}\circ
l_{\gamma}(t)=\gamma^{\,0}\circ L_{\gamma,
f}^{\,-1}\left(l_{\widetilde{\gamma}}(t)\right)$$
and, denoting $s:=l_{\widetilde{\gamma}}(t),$ we obtain
$$\gamma^{\,*}(s)=\gamma^{\,0}\circ L_{\gamma,
f}^{\,-1}(s)\,.$$
Thus $\gamma^{\,*}$ is absolutely continuous because $L_{\gamma,
f}^{\,-1}(s)$ is absolutely continuous, see Theorem 2.10.13 in
\cite{Fe}, and
$$|\gamma^{\,0}(s_1)-\gamma^{\,0}(s_2)|\le |s_1-s_2|$$
for all $s_1, s_2\in [0, l(\gamma)].$

Inversely, let us assume that $f^{-1}(y)$ does not contain a
nondegenerate curve for every $y\in {\Bbb R}^n$ and the curve
$\gamma^{\,*}$ is rectifiable and absolutely continuous for
$q$--a.e. closed curve $\widetilde{\gamma}=f\circ \gamma.$ Then
$L^{\,-1}_{\gamma, f}$ is well--defined for $q$--a.e. closed curve
$\widetilde{\gamma}$ and all $\gamma$ with
$\widetilde{\gamma}=f\circ \gamma.$ By Theorem 2.6 in \cite{Va$_1$}
$\gamma^{\,*\,0}=\gamma^{\,0}.$ Moreover, for all such
$\widetilde{\gamma},$ $\gamma$ and $\gamma^{\,*},$
$l_{\gamma^{\,*}}(s)=L_{\gamma, f}^{\,-1}(s),$ and absolutely
continuity of $L_{\gamma, f}^{\,-1}(s)$ follows from Theorem 1.3 in
\cite{Va$_1$}. Let $\Gamma_1$ be the family of all closed curves
$\widetilde{\alpha}=f\circ\alpha$ in $f(D)$ such that $\alpha^{\,*}$
either is not rectifiable or $L_{\alpha, f}^{\,-1}(s)$ is not
absolutely continuous. By the assumption $M_q(\Gamma_1)=0.$ Let
$\Gamma$ be a family of all curves $\widetilde{\gamma}=f\circ\gamma$
in $f(D)$ such that $\gamma$ either is not locally rectifiable or
$L_{\gamma, f}^{\,-1}(s)$ is not locally absolutely continuous. Then
$\Gamma>\Gamma_1$ and, thus, $M_q(\Gamma)\le M_q(\Gamma_1)$ that
implies the desired equality $M_q(\Gamma)=0.$
\end{proof}$\Box$

\medskip
A mapping $\varphi:X\rightarrow Y$ between metric spaces $X$
and $Y$ is said to be a {\it Lipschitzian} provided
$$
{\rm dist} \left(\varphi(x_1),\varphi(x_2)\right)\le
M\cdot\text{dist} (x_1,x_2)
$$
for some $M<\infty$ and for all $x_1$ and $x_2\in X.$ The mapping
$\varphi$ is called {\it bi--lipschitz} if, in addition,
$$
M^*\text{dist} \left(x_1,x_2\right)\le\text{dist}
\left(\varphi\left(x_1\right),\varphi\left(x_2\right)\right)
$$ for some $M^*>0$ and for all $x_1$ and $x_2\in X.$ In what follows, $X$
and $Y$ are subsets of ${\Bbb R}^n$ with the Euclidean distance.

The following proposition can be found in \cite{MRSY$_1$}, see Lemma
3.20, see also Lemma 8.3 in \cite{MRSY$_2$}.

\medskip
\begin{proposition}\label{pr1}{\sl\,
Let $f:D\rightarrow{\Bbb R}^n$ be differentiable a.e. in $D$ and
have $N$-- and $N^{-1}$--pro\-per\-ties. Then there is a countable
collection of compact sets $C^*_k\subset D$ such that $m(B_0)=0$
where $B_0=D\setminus\bigcup\limits_{k=1}\limits^{\infty} C^*_k$ and
$f|_{C^*_k}$ is one--to--one and bi--lipschitz for every
$k=1,2,\ldots ,$ and, moreover, $f$ is differentiable at points of
$C_k^*$ with $J(x,f)\ne 0.$}
\end{proposition}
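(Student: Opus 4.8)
The plan is to discard the negligible part of $D$ and then cut the remaining ``good'' set into countably many bi--lipschitz pieces by quantizing the derivative. Let $D_f$ be the set of differentiability points of $f$; by hypothesis $m(D\setminus D_f)=0$, so this set is harmless. Put $Z=\{x\in D_f:J(x,f)=0\}$. I would first invoke the standard Sard--type lemma for a.e.\ differentiable maps (cf.\ \cite{Fe}): since $f'(x)$ is singular on $Z$, its affine approximation there maps into a hyperplane, so covering $Z$ by balls on which $f$ stays within $\varepsilon|y-x|$ of this approximation and applying a Vitali covering argument gives $m(f(Z))\le C\varepsilon$ for every $\varepsilon>0$, whence $m(f(Z))=0$; this step uses only differentiability. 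Now the $N^{-1}$--property enters decisively: as $Z\subset f^{\,-1}(f(Z))$ and $m(f(Z))=0$, we obtain $m(Z)=0$. Consequently it suffices to decompose $A:=\{x\in D_f:J(x,f)\ne 0\}$, since $D\setminus A$ is already negligible and will go into $B_0$.

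On $A$ the derivative is invertible and $f$ is locally close to the affine bi--lipschitz map $y\mapsto f(x)+f'(x)(y-x)$, with constants controlled by $l(f'(x))$ and $\Vert f'(x)\Vert$; the difficulty is that the radius on which this closeness holds depends on $x$. To make it uniform I would fix a countable cover of the open set of invertible matrices by balls $B(T_j,\delta_j)$ with $\delta_j<\tfrac14\,l(T_j)$, and for $i,j\in{\Bbb N}$ set
$$A_{j,i}=\left\{x\in A:\ \Vert f'(x)-T_j\Vert<\delta_j,\ |f(y)-f(x)-f'(x)(y-x)|\le\delta_j|y-x|\ \text{whenever}\ |y-x|<1/i\right\}.$$
Each $A_{j,i}$ is measurable (the second condition may be tested over rational $y$) and, by differentiability and density, $\bigcup_{j,i}A_{j,i}=A$. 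Intersecting $A_{j,i}$ with the cubes of a grid of mesh $<1/(i\sqrt{n})$ splits it into countably many sets of diameter $<1/i$; on such a set $E$, for $x,y\in E$ the triangle inequality against the affine approximation yields $|f(x)-f(y)|\ge\bigl(l(T_j)-2\delta_j\bigr)|x-y|>0$ and $|f(x)-f(y)|\le\bigl(\Vert T_j\Vert+2\delta_j\bigr)|x-y|$, so $f|_E$ is one--to--one and bi--lipschitz. Relabelling produces countably many Borel pieces $B_k$ with $\bigcup_k B_k=A$, on each of which $f$ is bi--lipschitz and $J(x,f)\ne 0$.

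It remains to pass to compacta and collect the null remainder. By inner regularity of Lebesgue measure, inside each $B_k$ choose compact sets $C^*_{k,l}\subset B_k$ with $m\bigl(B_k\setminus\bigcup_l C^*_{k,l}\bigr)=0$; the bi--lipschitz bounds and the nonvanishing of the Jacobian are inherited by these subsets. Re--indexing $\{C^*_{k,l}\}$ as $\{C^*_k\}_{k=1}^{\infty}$, the leftover $B_0=D\setminus\bigcup_k C^*_k$ is contained in the union of $D\setminus D_f$, the set $Z$, and $\bigcup_k\bigl(B_k\setminus\bigcup_l C^*_{k,l}\bigr)$, each of Lebesgue measure zero, so $m(B_0)=0$, as required.

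I expect the main obstacle to be the uniformity in the second step. Differentiability supplies, for each point separately, a radius within which $f$ is trapped between affine bi--lipschitz bounds, but this radius is not bounded below on $A$, so no single injectivity estimate is available globally. The double quantization over $(j,i)$ followed by the grid decomposition is precisely what converts these pointwise, non--uniform estimates into honest bi--lipschitz estimates with fixed constants on each small piece; the care needed to check measurability of the $A_{j,i}$, their exhaustion of $A$, and the stability of the constants under restriction to compacta is the technical heart of the argument.
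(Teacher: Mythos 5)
Your argument is correct, but note that the paper does not actually prove this proposition: it is quoted verbatim from Lemma 3.20 of \cite{MRSY$_1$} (Lemma 8.3 of \cite{MRSY$_2$}), so there is no in-text proof to compare against. What you have written is a sound self-contained proof along the same standard lines as the cited source: the Sard-type lemma for a.e.\ differentiable maps kills the image of the zero-Jacobian set, the $N^{-1}$-property then kills that set itself, and the double quantization of $f^{\,\prime}$ over a countable cover of the invertible matrices combined with a fine grid (essentially Federer's 3.1.8/3.2.2 decomposition) produces the countably many bi-lipschitz pieces, after which inner regularity yields the compacta $C^*_k$. All the estimates check out (in particular $l(T_j)-2\delta_j\ge\tfrac12 l(T_j)>0$), and the measurability of the sets $A_{j,i}$ via rational test points is handled correctly. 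One small observation: your proof never uses the $N$-property, only the $N^{-1}$-property; this is consistent with the cited lemma, where the $N$-property appears only as part of the standing hypotheses on the class of mappings considered.
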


\medskip
Given a set $E$ in ${\Bbb R}^n$ and a curve $\gamma
:\Delta\rightarrow {\Bbb R}^n,$ we identify $\gamma\cap E$ with
$\gamma\left(\Delta\right)\cap E.$
If $\gamma$ is locally rectifiable, then we set
$$
l\left(\gamma\cap E\right) =  m_1(E_ {\gamma}),
$$ where
$E_ {\gamma} = l_{\gamma}\left(\gamma ^{-1}\left(E\right)\right);$
here $l_{\gamma}:\Delta\rightarrow\Delta
_{\gamma}$ as in the previous section. Note that %
$E_ {\gamma} = \gamma _0^{-1}\left(E\right),$
where $\gamma _0 :\Delta _{\gamma}\rightarrow {\Bbb R}^n$ is the
natural parametrization of $\gamma $ and
$$l\left(\gamma\cap E\right) = \int\limits_{\Delta} \chi
_E\left(\gamma\left(t\right) \right) |dx|:= \int\limits_{\Delta
_{\gamma}} \chi_{E_\gamma }(s)\, ds\,.$$

\medskip
The following statement can be found in \cite{MRSY$_2$}, see Theorem
9.1 for $k=1.$

\medskip
\begin{proposition}\label{pr2}{\sl\,
Let $E$ be a set in a domain $D\subset{\Bbb R}^n,$ $n\ge 2,$ $p\ge
1.$ Then $E$ is measurable if and only if $\gamma\cap E$ is
measurable for $p$--a.e. curve $\gamma$ in $D.$ Moreover, $m(E)=0$
if and only if
$$l(\gamma\cap E)=0$$ 
on $p$--a.e. curve $\gamma$ in $D.$}
\end{proposition}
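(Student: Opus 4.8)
The plan is to treat the four implications separately, reducing everything to two ingredients: the Fuglede-type fact that a null set is seen by almost no curve, and the classical Fubini theorem applied to the special families of coordinate segments, which are known to have positive $p$-modulus.

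First I would prove the elementary directions from a single lemma: if $m(S)=0$ then $l(\gamma\cap S)=0$ for $p$-a.e.\ $\gamma$. To see this, enclose $S$ in a Borel $G_\delta$-set $B\supset S$ with $m(B)=0$ and put $\rho_0=\infty\cdot\chi_B$. Since $m(B)=0$ we have $\int_{{\Bbb R}^n}\rho_0^p\,dm=0$, while for every $\gamma$ with $m_1(B_\gamma)>0$ one has $\int_\gamma\rho_0\,|dx|=\infty\ge 1$; hence $\rho_0\in{\rm adm}\,\Gamma_B$, where $\Gamma_B=\{\gamma:m_1(B_\gamma)>0\}$, and monotonicity of the modulus gives $M_p(\Gamma_B)=0$. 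For the remaining curves $B_\gamma$ is $m_1$-null, so $S_\gamma\subset B_\gamma$ is $m_1$-null as well, and thus $\gamma\cap S$ is measurable with $l(\gamma\cap S)=0$ for $p$-a.e.\ $\gamma$. This yields at once the forward half of the second assertion, and also the forward half of the first: writing a measurable $E=B_0\cup S$ with $B_0$ Borel and $m(S)=0$, the section $E_\gamma=(B_0)_\gamma\cup S_\gamma$ is measurable for $p$-a.e.\ $\gamma$, since $(B_0)_\gamma=\gamma_0^{-1}(B_0)$ is Borel by continuity of the natural parametrization and $S_\gamma$ is $m_1$-null by the lemma.

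Next I would establish the converse of the measure-zero assertion (for $E$ measurable, the measurability being already furnished by the first part) via the coordinate Fubini theorem together with positivity of modulus. Suppose $m(E)>0$ and choose a closed cube $Q\subset D$ with edges parallel to the axes for which $m(E\cap Q)>0$; let $\Gamma_Q$ be the family of all segments in $Q$ parallel to the $x_n$-axis, parametrized by their base point $z$ in the opposite face $Q'\subset{\Bbb R}^{n-1}$. The family $\Gamma_Q$ has strictly positive $p$-modulus (the constant density equal to the reciprocal edge-length is admissible, giving a finite upper bound, while positivity is classical). By Fubini, $m(E\cap Q)=\int_{Q'}l(\gamma_z\cap E)\,dz$, so $l(\gamma_z\cap E)>0$ on a subset of $Q'$ of positive $(n-1)$-measure, and the corresponding subfamily of $\Gamma_Q$ still has positive $p$-modulus. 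Hence it is false that $l(\gamma\cap E)=0$ for $p$-a.e.\ $\gamma$, which proves the second assertion by contraposition.

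The main difficulty is the remaining implication: if $\gamma\cap E$ is measurable for $p$-a.e.\ $\gamma$, then $E$ itself is measurable. Here I would pass to a bounded piece of $E$ and choose a Borel kernel $A$ and hull $B$ with $A\subset E\subset B$, $m(A)=m_*(E)$ and $m(B)=m^*(E)$, so that $E$ is measurable exactly when $m(B\setminus A)=0$. For $p$-a.e.\ $\gamma$ the section $E_\gamma$ is measurable and satisfies $A_\gamma\subset E_\gamma\subset B_\gamma$ with $A_\gamma,B_\gamma$ Borel; the aim is to convert the hypothesis into $l\bigl(\gamma\cap(B\setminus A)\bigr)=0$ for $p$-a.e.\ $\gamma$ and then apply the already-proven converse to the \emph{Borel} set $B\setminus A$ to conclude $m(B\setminus A)=0$. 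The delicate point is that measurability of a single section $E_\gamma$ does not by itself pin down $m_1\bigl((B\setminus A)_\gamma\bigr)$; this is precisely where the richness of the full curve family—segments in every direction and position, not merely those parallel to one axis—must be exploited, via a measurable-hull argument applied to the non-measurable pieces $B\setminus E$ and $E\setminus A$, so that a positive-modulus family of segments meeting one of these pieces in positive length would contradict the measurability of the sections $E_\gamma$. I expect essentially all of the genuine work of the proposition to reside in making this last reduction precise; the preceding three implications are routine once the Fuglede lemma and the coordinate Fubini theorem are in hand.
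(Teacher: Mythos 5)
The paper itself offers no proof of this proposition: it is quoted from \cite{MRSY$_2$} (Theorem 9.1 with $k=1$), so there is no in-text argument to compare yours against, and your proposal has to stand on its own. Judged that way, three of your four implications are correct and are the standard ones: the Fuglede-type lemma via $\rho_0=\infty\cdot\chi_B$ for a Borel null hull $B$ settles both forward implications, and, for a set $E$ already known to be measurable, the Fubini identity $m(E\cap Q)=\int_{Q'}l(\gamma_z\cap E)\,dz$ over a family of parallel segments of positive $p$-modulus settles the converse of the measure-zero statement (two minor points: the positivity of the $p$-modulus of the subfamily over a base set of positive $(n-1)$-measure uses that the segments have a common finite length, which your cube provides; and the measure-zero converse for a general, not a priori measurable, $E$ is recovered only after the measurability converse, since $l(\gamma\cap E)=0$ for $p$-a.e.\ $\gamma$ makes the sections null, hence measurable, and only then does the first part give measurability of $E$).

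The genuine gap is the one you name yourself: the implication that measurability of $\gamma\cap E$ for $p$-a.e.\ $\gamma$ forces measurability of $E$ is not proved but only announced as ``the main difficulty,'' and the strategy you sketch for it fails as stated. Writing $A\subset E\subset B$ with $m(B\setminus A)=m^*(E)-m_*(E)>0$, the hypothesis only gives $A_\gamma\subset E_\gamma\subset B_\gamma$ with $E_\gamma$ measurable; it does not give $m_1\bigl((B\setminus A)_\gamma\bigr)=0$, and the measurable-hull route is blocked by the failure of Fubini for arbitrary sets: the section of a measurable hull need not be a measurable hull of the section. Under CH the Sierpinski set $P=\{(x,y):y\prec x\}$ has full outer measure and the whole square as its hull, yet every vertical section is countable; moreover all of its coordinate sections are measurable although $P$ is not, so no family of axis-parallel segments can detect non-measurability, and a correct proof must exploit a genuinely richer positive-modulus family of curves --- you give no indication of how. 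Since this is the only non-routine implication of the proposition, the proposal cannot be accepted as a complete proof; if you only need the proposition as this paper actually uses it (namely $m(E)=0\Rightarrow l(\gamma\cap E)=0$ for $p$-a.e.\ $\gamma$, and measurability of sections of measurable sets), your argument does cover those cases, and for the full statement you should cite Theorem 9.1 of \cite{MRSY$_2$} as the authors do.
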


\medskip
The following result was proved in \cite{MRSY$_1$}--\cite{MRSY$_2$}
for a case $p=n.$ Here we extend the study of this problem to arbitrary $p$, $p\ge 1$.

\medskip
\begin{theorem}\label{th1}
{\sl\, Let $f:D\rightarrow {\Bbb R}^n$ be differentiable a.e. in
$D,$ have  $N$ -- and $N^{-1}$--pro\-per\-ties and
$L^{(1)}_p$--pro\-per\-ty. Then  relation (\ref{eq5.4AA})  holds for every measurable set $E\subset D,$ every family $\Gamma\subset
E$ of curves  $\gamma$ in $E$ and any  $\rho_*(y)\in {\rm
adm\,}f(\Gamma),$ where $K_{I, p}\left(y, f^{\,-1}, E\right)$ is
defined by (\ref{eq1}).}
\end{theorem}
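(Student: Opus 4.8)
The plan is to manufacture an admissible metric for $\Gamma$ out of $\rho_*$ and then reduce (\ref{eq5.4AA}) to a change of variables. Concretely, I would set
$$\rho(x)=\rho_*(f(x))\,\Vert f^{\,\prime}(x)\Vert$$
at those $x\in E$ where $f$ is differentiable and $\rho(x)=0$ otherwise, so that $\int\limits_D\rho^p\,dm(x)=\int\limits_E\rho_*^p(f(x))\,\Vert f^{\,\prime}(x)\Vert^p\,dm(x)$. The whole statement then splits into two independent tasks: first, that $\rho\in{\rm adm}\,\Gamma$ up to a curve family of $p$--modulus zero, and second, that $\int\limits_D\rho^p\,dm$ is bounded by the right--hand side of (\ref{eq5.4AA}). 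Granting both, the definition of the $p$--modulus together with the countable subadditivity recorded after the definition of $M_p$ yields $M_p(\Gamma)\le\int\limits_D\rho^p\,dm\le\int\limits_{f(E)}K_{I,p}(y,f^{\,-1},E)\,\rho_*^p(y)\,dm(y)$.

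For the integral bound I would invoke Proposition \ref{pr1}. Writing $D_*=\bigcup_k C^*_k$ and disjointifying by $B_1=C^*_1$ and $B_k=C^*_k\setminus\bigcup_{j<k}C^*_j$, one obtains disjoint Borel sets on each of which $f$ is injective, bi--lipschitz, and $J(x,f)\ne 0$. Since $m(B_0)=0$ the $x$--integral over $E\cap B_0$ vanishes, and by the $N$--pro\-per\-ty $m(f(E\cap B_0))=0$, so this piece is irrelevant on the image side as well. On each $B_k$ the classical change of variables for a bi--lipschitz injection transforms $\int\limits_{E\cap B_k}\rho_*^p(f(x))\,\Vert f^{\,\prime}(x)\Vert^p\,dm(x)$ into $\int\limits_{f(E\cap B_k)}\rho_*^p(y)\,K_{O,p}\bigl((f|_{B_k})^{-1}(y),f\bigr)\,dm(y)$, because $\Vert f^{\,\prime}\Vert^p/|J|=K_{O,p}$ where $J\ne 0$. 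Summing over $k$ and noting that, for fixed $y$, the preimages $(f|_{B_k})^{-1}(y)$ run exactly over $f^{\,-1}(y)\cap(E\setminus B_0)$, the integrand becomes $\sum_{x\in(E\setminus B_0)\cap f^{\,-1}(y)}K_{O,p}(x,f)\le K_{I,p}(y,f^{\,-1},E)$, which gives the claimed estimate; the inequality rather than equality comes precisely from discarding the preimages lying in the null set $B_0$.

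The heart of the argument — and the step I expect to be the main obstacle — is the admissibility of $\rho$. By the $L^{(1)}_p$--pro\-per\-ty, for $p$--a.e.\ curve $\gamma$ the image $\widetilde{\gamma}=f\circ\gamma$ is locally rectifiable and $L_{\gamma,f}$ has the $N$--pro\-per\-ty, i.e.\ is absolutely continuous; and by Proposition \ref{pr2}, applied to the null set of points at which $f$ is not differentiable, for $p$--a.e.\ $\gamma$ the map $f$ is differentiable at $\gamma^{\,0}(s)$ for a.e.\ arc length parameter $s$. For every $\gamma$ enjoying both properties I would pass to the normal representation $\gamma^{\,0}$, use the chain rule $(f\circ\gamma^{\,0})^{\,\prime}(s)=f^{\,\prime}(\gamma^{\,0}(s))\,\gamma^{\,0\,\prime}(s)$ with $|\gamma^{\,0\,\prime}(s)|=1$ to get $L_{\gamma,f}^{\,\prime}(s)=|(f\circ\gamma^{\,0})^{\,\prime}(s)|\le\Vert f^{\,\prime}(\gamma^{\,0}(s))\Vert$ for a.e.\ $s$, and then change variables through $\widetilde{s}=L_{\gamma,f}(s)$ to obtain $\int\limits_{\widetilde{\gamma}}\rho_*(y)\,|dy|=\int_0^{l(\gamma)}\rho_*(f(\gamma^{\,0}(s)))\,L_{\gamma,f}^{\,\prime}(s)\,ds\le\int\limits_\gamma\rho(x)\,|dx|$. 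Since $\rho_*\in{\rm adm}\,f(\Gamma)$, the left--hand integral is at least $1$, whence $\int\limits_\gamma\rho\,|dx|\ge 1$ for $p$--a.e.\ $\gamma\in\Gamma$. The exceptional family has $p$--modulus zero, so it may be deleted without changing $M_p(\Gamma)$, and admissibility is thereby established in the only sense needed. The delicate points are the justification of the chain rule along $p$--a.e.\ curve (supplied by Proposition \ref{pr2}) and the monotone change of variables for the absolutely continuous, possibly non--injective, length function $L_{\gamma,f}$.
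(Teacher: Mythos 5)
Your proposal is correct and follows essentially the same route as the paper: the same test function $\rho(x)=\rho_*(f(x))\Vert f^{\,\prime}(x)\Vert$, the same disjointification of the sets $C^*_k$ from Proposition \ref{pr1}, the same change of variables on each $B_k$ via the area formula, and the same use of Proposition \ref{pr2} to discard the null set along $p$--a.e.\ curve. The only difference is cosmetic: where you carry out the chain--rule and length--function substitution by hand to show $\int_{f\circ\gamma}\rho_*\,|dy|\le\int_\gamma\rho\,|dx|$, the paper simply cites Theorem 5.3 of V\"{a}is\"{a}l\"{a} (Lemma II.2.2 in Rickman) for that inequality.
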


\medskip
\begin{proof} By Theorem III.6.6 (iV)  \cite{Sa},
$E=B\cup B_0,$ where $B$ is a set of the class $F_{\sigma},$ and
$m(B_0)=0.$ Consequently, $f(E)$ is measurable by $N$--pro\-per\-ty
of $f.$ Without loss of generality, we may assume that $f(E)$ is a
Borel set and that $\rho_*\equiv 0$ outside of $f(E).$ In other case
we can find a Borel set $G$ such that $f(E)\subset G$ and
$m(G\setminus f(E))=0,$ see (ii) of the Theorem III.6.6 in
\cite{Sa}. Now, a set $f^{-1}(G)$ is  Borel and $E\subset
f^{-1}(G).$ Note that in this case the function
$$
\rho^G_* (y)= \left \{\begin{array}{rr}
\rho_*(y), &  {\rm for }\,\,\, y\in G, \\
0, & y\in \overline{{\Bbb R}^n}\setminus G\end{array} \right.$$
is a Borel function, as well.  Now suppose that $f(E)$ is a Borel
set. Let $B_0$ and $C_k^*,$ $k=1,2,\ldots ,$ be as in Proposition
\ref{pr1}. Setting by induction $B_1=C_1^*,$ $B_2=C_2^*\setminus
B_1,\ldots ,$ and
\begin{equation} \label{eq7.3.7y} B_k=C_k^*\setminus
\bigcup\limits_{l=1}\limits^{k-1}B_l
\end{equation} we obtain a countable covering of $D$ consisting of
mutually disjoint Borel sets $B_k, k=0,1,2,\ldots $ with $m(B_0)=0,$
$B_0=D\setminus \bigcup\limits_{k=1}^{\infty} B_k.$
Remark that $\gamma^{\,0}(s)\not\in B_0$ for a.e. $s$ and $p$--a.e.
closed curve $\gamma\in \Gamma,$ see Proposition \ref{pr2}; here
$\gamma^{0}(s)$ denotes a normal representation of $\gamma.$ By
Proposition \ref{pr3} a curve $f\circ\gamma^{0}$ is rectifiable and
absolutely continuous for $p$--a.e. $\gamma\in\Gamma.$
\medskip

Let $\Vert f^{\,\prime}(x)\Vert=\max \{ |f^{\,\prime}(x)h|: h \in
{\Bbb R}^n, |h|=1\}.$ Given $\rho_*\in{\rm adm}\,f(\Gamma),$ set
%
$$\rho (x)= \left \{\begin{array}{rr}
\rho_*(f(x))\Vert f^{\,\prime}(x)\Vert, &  {\rm for }\quad x\in D\setminus B_0, \\
0, & {\rm otherwise.}\end{array} \right. 
$$
By Theorem 5.3 in \cite{Va$_1$} (see also Lemma II.2.2 in \cite{Ri})
we obtain that
$$\int\limits_{\gamma} \rho(x)
|dx|=\int\limits_{\gamma} \rho_*(f(x))\Vert f^{\,\prime}(x)\Vert
|dx|\ge $$
\begin{equation} \label{eq7.3.7b}
\ge \int\limits_{f\circ\gamma} \rho_*(y) |dy| \ge 1
\end{equation}
for $p$--a.e. closed $\gamma\in\Gamma,$ i.e., $\rho\in{\rm
adm}\,\Gamma.$ The case of arbitrary $\alpha\in \Gamma$ can be
gotten from (\ref{eq7.3.7b}) by taking of $\sup$ over all closed
subpaths $\gamma\subset \alpha.$
Therefore,
\begin{equation} \label{eq7.3.7z} M_p(\Gamma)\le\int\limits_{D}
\rho^p(x) dm(x). \end{equation}
Note that $\rho =\sum\limits_{k=1}\limits^{\infty}\rho_k$, where
$\rho_k = \rho\cdot\chi_{B_k}$ have mutually disjoint supports. By
3.2.5 for $m=n$ in \cite{Fe}
we obtain that
$$\int\limits_{f(B_k\cap E)} K_{O, p}\left(f_k^{-1}(y),
f)\cdot\rho_*^p(y\right) dm(y)= \int\limits_{B_k} K_{O, p}(x,
f)\rho_*^p\left(f(x)\right)|J(x, f)|dm(x)=$$
\begin{equation}
\label{eq7.3.7x} =\int\limits_{B_k} \Vert f^{\,\prime}(x)\Vert^p
\rho_*^p\left(f(x)\right)dm(x)=\int\limits_{D}\rho_k^p(x) dm(x)\,,
\end{equation} where every $f_k=f|_{B_k},$ $k=1,2,\ldots $ is
injective by the construction.

\medskip
Finally, by the Lebesgue positive convergence theorem, see e.g.
Theorem I.12.3 in \cite{Sa}, we conclude from (\ref{eq7.3.7z}) and
(\ref{eq7.3.7x}) that $$\int\limits_{f(E)} K_{I,p}(y, f^{-1},
E)\cdot\rho_*^p(y)dm(y)= \int\limits_{D}
\sum\limits_{k=1}\limits^{\infty}\rho_k^p(x)dm(x) \ge
M_p(\Gamma)\,.\,\Box$$
\end{proof}

\medskip
The following result is a generalization of the known Poletskii
inequality for quasiregular mappings, see Theorem 1 in \cite{Pol}
and Theorem II.8.1 in \cite{Ri}. Its analog was also proved in
\cite{MRSY$_1$}--\cite{MRSY$_2$} for the case $q=n.$

\medskip
\begin{theorem}\label{th2}
{\sl\, Let a mapping $f:D\rightarrow {\Bbb R}^n$ be differentiable
a.e. in $D,$ have $N$-- and $N^{-1}$--pro\-per\-ties, and
$L^{(2)}_q$--pro\-per\-ty, too. Then  relation (\ref{eq5*AA})
holds for every curve family $\Gamma$ in $D$ and any function
$\rho\in{\rm adm}\,\Gamma.$}
\end{theorem}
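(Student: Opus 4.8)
The plan is to prove the Poletskii-type inequality
\eqref{eq5*AA} by constructing an explicit admissible function for the
image family $f(\Gamma)$ out of the given $\rho\in{\rm adm}\,\Gamma,$
mirroring the strategy of Theorem \ref{th1} but working ``forward''
through $f$ rather than through $f^{-1}.$ Concretely, I would first
invoke Proposition \ref{pr1} to obtain the countable collection of
compact sets $C_k^*$ on which $f$ is bi--lipschitz and differentiable
with $J(x,f)\neq0,$ and then pass to the disjoint Borel covering
$B_0,B_1,B_2,\ldots$ of $D$ defined inductively by \eqref{eq7.3.7y},
with $m(B_0)=0$ and $B_0=D\setminus\bigcup_{k\ge1}B_k.$ Using
Proposition \ref{pr2} together with the $L^{(2)}_q$--property, I would
argue that for $q$--a.e. curve $\widetilde{\gamma}\in f(\Gamma)$ every
lifting $\gamma$ meets $B_0$ on a set of linear measure zero and is
locally rectifiable, so the exceptional curves can be discarded.

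The heart of the argument is the definition of the candidate density
on the image side. For $y\in f(D)$ I would set
$$
\rho_*(y)=\left(\sum_{x\in f^{-1}(y)}\rho^q(x)\,
K_{I,q}(x,f)^{-1}\cdot\text{(weight)}\right)^{?}
$$
but a cleaner route, and the one I expect to succeed, is to define
$$
\rho_*(y)=\sup_{x\in f^{-1}(y)}
\frac{\rho(x)}{l\!\left(f^{\,\prime}(x)\right)}
\qquad\text{(summed appropriately over the pieces }B_k\text{)},
$$
and then verify admissibility: for a lifting $\gamma$ sitting in a
single bi--lipschitz piece one has the infinitesimal comparison
$|d(f\circ\gamma)|\ge l\!\left(f^{\,\prime}(x)\right)|d\gamma|$ at
points of differentiability, so
$\int_{f\circ\gamma}\rho_*\,|dy|\ge\int_{\gamma}\rho\,|dx|\ge1.$ This
lower length estimate is exactly the analog, for the image family, of
the change--of--variable inequality \eqref{eq7.3.7b} used in Theorem
\ref{th1}, and here it relies on the $N^{-1}$--property encoded in
$L^{(2)}_q$ to guarantee that the lifting's length function $L_{\gamma,f}^{-1}$ is absolutely continuous (Proposition \ref{pr3}$(i_2)$),
so that the pointwise derivative bound integrates up correctly.

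Once admissibility of $\rho_*$ for $f(\Gamma)$ is established, the
modulus estimate follows by the same measure--theoretic bookkeeping as
before: split $\rho_*^q=\sum_k\rho_{*,k}^q$ according to the disjoint
pieces $f(B_k),$ apply the area formula (Federer's 3.2.5 for $m=n,$ as
in \eqref{eq7.3.7x}) on each injective restriction $f_k=f|_{B_k}$ to
transfer the integral over $f(B_k)$ to an integral over $B_k,$ and
recognize the resulting integrand as $K_{I,q}(x,f)\,\rho^q(x)$ after
substituting the definition $K_{I,q}(x,f)=|J(x,f)|/l\!\left(f^{\,\prime}(x)\right)^q$; summing over $k$ and using the Lebesgue monotone
convergence theorem yields
$$
M_q(f(\Gamma))\le\int_{f(D)}\rho_*^q(y)\,dm(y)
\le\int_D K_{I,q}(x,f)\,\rho^q(x)\,dm(x).
$$

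The step I expect to be the main obstacle is the careful
admissibility verification on the image side, i.e.\ justifying the
infinitesimal inequality $|d(f\circ\gamma)|\ge l\!\left(f^{\,\prime}(x)\right)|d\gamma|$ along $q$--a.e.\ curve of $f(\Gamma)$ and
integrating it through the pieces $B_k$ to obtain
$\int_{f\circ\gamma}\rho_*\,|dy|\ge 1.$ Unlike the outer--distortion
case, where one pushes the admissible density through $\|f'(x)\|$ and
only needs an upper length bound, here one needs a lower bound on how
much $f$ stretches tangent vectors, which is precisely where
$l\!\left(f^{\,\prime}(x)\right)$ and the $N^{-1}$/ $L^{(2)}_q$
hypotheses enter; one must handle the possibility that a lifting
passes through infinitely many pieces $B_k$ and through the null set
$B_0,$ and show these contribute nothing for $q$--a.e.\ image curve.
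This is the genuinely substantive point; the remaining computations
are the routine area--formula and convergence arguments already
rehearsed in the proof of Theorem \ref{th1}.
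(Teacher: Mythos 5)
Your proposal follows essentially the same route as the paper: define $\widetilde{\rho}(y)=\sup_{x\in f^{-1}(y)\cap D\setminus B_0}\rho(x)/l\left(f^{\,\prime}(x)\right)$, verify its admissibility for $f(\Gamma)$ via the lower bound $1=|(f\circ\gamma^{\,*})^{\,\prime}(s)|\ge l\left(f^{\,\prime}(\gamma^{\,*}(s))\right)\cdot|\gamma^{\,*\prime}(s)|$ along the $f$--representation of $q$--a.e.\ image curve, and then transfer $\int\widetilde{\rho}^{\,q}\,dm$ back to $\int_D K_{I,q}(x,f)\rho^q(x)\,dm(x)$ by the area formula on each injective piece $B_k.$ The one imprecision is that the sets $f(B_k)$ need not be disjoint (only the $B_k$ are), so you cannot split $\rho_*^q=\sum_k\rho_{*,k}^q$ as an equality; the paper instead writes $\widetilde{\rho}=\sup_k\rho_k$ and uses the one--sided estimate $\sup_k\rho_k^q\le\sum_k\rho_k^q,$ which is all that is needed for the final chain of inequalities.
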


\medskip
\begin{proof} Let $B_k,$ $k=0,1,2,\ldots ,$ be given as above by
(\ref{eq7.3.7y}). By the assumption, $f$ has $N$--pro\-per\-ty in
$D$ and, consequently, $m(f(B_0))=0.$ Let $\rho\in{\rm adm}\,\Gamma
$ and
%
%
%
$$\widetilde{\rho}(y)=\chi_{f(D\setminus B_0)}\cdot
\sup\limits_{x\in f^{-1}(y)\cap D\setminus B_0}
\rho^* (x)\,,$$ 
where
%
$$\rho^{\,*} (x)= \left \{\begin{array}{rr}\rho
(x)/l\left(f^{\,\prime}(x)\right),
&  {\rm for }\quad x\in D\setminus B_0, \\
0, & {\rm otherwise.}\end{array} \right.$$
%
%
Note that $\widetilde{\rho}(y)=\sup\limits_{k\in{\Bbb N}}\rho_k(y)$,
where
$$\rho_k(y)= \left \{\begin{array}{rr}
\rho^*(f^{-1}_k(y)), &  {\rm for }\,\,\,  y\in f(B_k), \\
0, & {\rm otherwise,}\end{array} \right.
$$ and every $f_k=f|_{B_k},$ $k=1,2,\ldots,$ is injective. Thus, the
function $\widetilde{\rho}$ is Borel, see section 2.3.2 in
\cite{Fe}.
\medskip

Let $\widetilde{\gamma}$ be a closed rectifiable curve such that
$\widetilde{\gamma}=f\circ \gamma,$ ${\widetilde{\gamma}}^0$ be a
normal representation of $\widetilde{\gamma}$ and $\gamma^*$ be
$f$--re\-pre\-sen\-ta\-tion of $\gamma$ by the respect to
$\widetilde{\gamma},$ see above. Since $m(f(B_0))=0,$
$\widetilde{\gamma}^0(s)\not\in f(B_0)$ for $q$--a.e. curve
$\widetilde{\gamma}$ and for a.e. $s\in [0, l(\widetilde{\gamma})],$
see Proposition \ref{pr2}. For $q$--a.e. paths $\widetilde{\gamma}$
and all $\gamma$ with $\widetilde{\gamma}=f\circ \gamma$ we have
$$\int\limits_{\widetilde{\gamma}} \widetilde{\rho}(y)|dy|=
\int\limits_{0}^{l(\widetilde{\gamma})}
\widetilde{\rho}({\widetilde{\gamma}}^0(s))\,ds\,=
$$
\begin{equation}\label{eq2}
=\int\limits_{0}^{l(\widetilde{\gamma})} \sup\limits_{x\in
f^{\,-1}(\widetilde{\gamma}^0(s))\cap D\setminus B_0}
\rho^{\,*}(x)\,\,ds\ge \int\limits_{0}^{l(\widetilde{\gamma})}
\frac{\rho(\gamma^{\,*}(s))}{l\left(
f^{\,\prime}(\gamma^{\,*}(s))\right)}\,ds\,.
\end{equation}

Since ${\widetilde{\gamma}}^{\,0}$ is rectifiable,
${\widetilde{\gamma}}^{\,0}(s)$ is differentiable a.e. Besides that,
a curve $\gamma^{\,*}$ is absolutely continuous for $q$--a.e.
$\widetilde{\gamma}$ by Proposition \ref{pr3}. Since
$\widetilde{\gamma}^0(s)\not\in f(B_0)$ for a.e. $s\in [0,
l(\widetilde{\gamma})]$ and $q$--a.e. curve $\widetilde{\gamma},$ we
have $\gamma^{\,*}(s)\not\in B_0$ for a.e. $s\in [0,
l(\widetilde{\gamma})].$ Thus, the derivatives
$f^{\,\prime}\left(\gamma^{\,*}(s)\right)$ and
$\gamma^{\,*\prime}(s)$ exist for a.e. $s.$ Taking into account the
formula of the derivative of the superposition of functions, and
that the modulus of the derivative of the curve by the natural
parameter equals to 1, we have that
$$1=\left|\left(f\circ {\gamma}^{\,*}\right)^{\,\prime}(s)\right|=
\left|f^{\,\prime}\left(\gamma^{\,*}(s)\right)\gamma^{\,*\prime}(s)\right|=$$
\begin{equation}\label{eq2A}
=\left|f^{\,\prime}\left(\gamma^{\,*}(s)\right)
\cdot\frac{\gamma^{\,*\prime}(s)}{|\gamma^{\,*\prime}(s)|}\right|\cdot
|\gamma^{\,*\prime}(s)|\ge
l\left(f^{\,\prime}\left(\gamma^{\,*}(s)\right)\right)\cdot
|\gamma^{\,*\prime}(s)|\,.\end{equation}
It follows from (\ref{eq2A}) that a.e.
\begin{equation}\label{eq3A}
\frac{\rho(\gamma^*(s))}
{l\left(f^{\,\prime}\left(\gamma^{\,*}(s)\right)\right)}\ge
\rho(\gamma^*(s))\cdot |\gamma^{\,*\prime}(s)|\,.
\end{equation}
By absolute continuity of $\gamma^{\,*},$ definition of $\rho$ and
Theorem 4.1 in \cite{Va$_1$} we obtain
\begin{equation}\label{eq4A}
1\le
\int\limits_{\gamma}\rho(x)|dx|=\int\limits_{0}^{l(\widetilde{\gamma})}
\rho\left(\gamma^{\,*}(s)\right) \cdot
|\gamma^{\,*\prime}(s)|\,ds\,.
\end{equation}
It follows from (\ref{eq2}), (\ref{eq3A}) and (\ref{eq4A}) that
$\int\limits_{\widetilde{\gamma}}\widetilde{\rho}(y)|dy|\ge 1$ for
$q$--a.e. closed curve $\widetilde{\gamma}$ in $f(\Gamma).$ The case
of the arbitrary path $\widetilde{\gamma}$ can be got from the
taking of $\sup$ in
$\int\limits_{\widetilde{\gamma}^{\,\prime}}\widetilde{\rho}(y)|dy|\ge
1$ over all closed subpaths $\widetilde{\gamma}^{\,\prime}$ of
$\widetilde{\gamma}.$ Thus, $\widetilde{\rho}(y)\in{\rm
adm}\,f(\Gamma)\setminus \Gamma_0,$ where $M_q(\Gamma_0)=0.$ Hence
\begin{equation}\label{eq3}
M_q\left(f\left(\Gamma\right)\right)\le \int\limits_{f(D)}
{\widetilde{\rho}}^{\,q}(y) dm(y)\,.
\end{equation}
Further, by 3.2.5 for $m=n$ in \cite{Fe} we have that
$$\int\limits_{B_k} K_{I,q}(x,f)\cdot\rho^q(x)dm(x)=
\int\limits_{B_k}\frac{|J(x,
f)|}{\left(l\left(f^{\,\prime}(x)\right)\right)^q}\cdot
\rho^q(x)dm(x)=$$
\begin{equation} \label{eq7.3.14}
=\int\limits_{f(B_k)}\frac{\rho^q\left(f_k^{-1}(y)\right)}
{\left(l\left(f^{\,\prime}\left(f_k^{\,-1}(y)\right)\right)\right)^q}\,dm(y)=
\int\limits_{f(D)}\rho_k^q(y)dm(y)\,.
\end{equation}
Finally, by the Lebesgue theorem, see Theorem 12.3 $\S\,12$ of Ch. I
in \cite{Sa}, we obtain from (\ref{eq3}) and (\ref{eq7.3.14}) the
desired inequality
$$\int\limits_{D} K_{I,q}(x,f)\cdot\rho^q(x)dm(x)=
\sum\limits_{k=1}\limits^{\infty}\int\limits_{B_k}
K_{I,q}(x,f)\cdot\rho^q(x)dm(x)=$$
$$=\int\limits_{f(D)}\sum\limits_{k=1}\limits^{\infty}
\rho_k^q(y)dm(y)\ge \int\limits_{f(D)}\sup\limits_{k\in {\Bbb N}}
\rho_k^q(y)dm(y)=$$
$$=\int\limits_{f(D)}{\widetilde{\rho}}^{\,q}(y)dm(y)
\ge M_q\left(f(\Gamma)\right)\,.\,\Box$$

\end{proof}

\medskip
The {\it proof of  Theorem \ref{th1A}} directly follows from
Theorems \ref{th1} and \ref{th2} . $\Box$

\section{The analog of the V\"{a}is\"{a}l\"{a} inequality}
\setcounter{equation}{0}

The following result generalizes the well--known V\"{a}is\"{a}l\"{a}
inequality for the mappings with bounded distortion, see $\S\,9$ of
Ch. II in \cite{Ri} and Theorem 3.1 in \cite{Va$_2$}; see also
Theorem 4.1 in \cite{KO}.

\medskip
\begin{theorem}\label{th3.1}{\sl\,
Let $f:D\rightarrow {\Bbb R}^n$ be differentiable a.e. in $D,$ have
$N$-- and $N^{-1}$--pro\-per\-ties, and $L^{(2)}_q$--pro\-per\-ty.
Let $\Gamma$ be a curve family in $D,$ $\Gamma^{\,\prime}$ be a curve
family in ${\Bbb R}^n$ and  $m$ be a positive integer such that the
following is true. Suppose that for every curve $\beta:I\rightarrow
D$ in $\Gamma^{\,\prime}$ there are curves
$\alpha_1,\ldots,\alpha_m$ in $\Gamma$ such that $f\circ
\alpha_j\subset \beta$ for all $j=1,\ldots,m,$ and for every $x\in
D$ and all $t\in I$ the equality $\alpha_j(t)=x$ holds at most
$i(x,f)$ indices $j.$ Then
%
$$M_q(\Gamma^{\,\prime} )\quad\le\quad \frac{1}{m}\quad\int\limits_D
K_{I, q}\left(x,\,f\right)\cdot \rho^q (x)\quad dm(x)$$
%
%
for every $\rho \in {\rm \,adm}\,\Gamma.$}
\end{theorem}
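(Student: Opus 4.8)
The plan is to follow the proof of Theorem~\ref{th2} almost verbatim, replacing the single lifting by the $m$ prescribed liftings $\alpha_1,\dots,\alpha_m$ and inserting one application of H\"older's inequality to extract the factor $1/m$. Let $B_k$, $k=0,1,2,\dots$, be the Borel decomposition given by (\ref{eq7.3.7y}), so that $m(B_0)=0$, each $f_k:=f|_{B_k}$ is injective and bilipschitz, and $J(x,f)\ne 0$ on $D\setminus B_0$ (Proposition~\ref{pr1}). Put $g(x)=\rho(x)/l\left(f^{\,\prime}(x)\right)$ for $x\in D\setminus B_0$ and $g(x)=0$ otherwise, and test the family $\Gamma^{\,\prime}$ against
$$\widetilde{\rho}(y)=m^{-1/q}\,H(y)^{1/q}\,,\qquad H(y)=\sum\limits_{x\in f^{\,-1}(y)\cap(D\setminus B_0)} i(x,f)\,g^{\,q}(x)\,.$$
As in Theorem~\ref{th2}, on each $f(B_k)$ the summand is $\big(i(\cdot,f)\,g^{\,q}\big)\circ f_k^{\,-1}$, so $H$, and hence $\widetilde{\rho}$, is a Borel function (section 2.3.2 in \cite{Fe}).

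Next I would verify admissibility for $q$--a.e. $\beta\in\Gamma^{\,\prime}$. One discards, as in Theorem~\ref{th2}, the $q$--null families of those $\beta$ whose normal representation meets $f(B_0)$ on a set of positive length (Proposition~\ref{pr2}, since $m(f(B_0))=0$) and of those $\beta$ for which some $f$--representation $\alpha_j^{\,*}$ fails to be absolutely continuous (Proposition~\ref{pr3}). For a remaining $\beta$ the pointwise estimate (\ref{eq2A})--(\ref{eq3A}) applied to each $\alpha_j^{\,*}$ gives $g(\alpha_j^{\,*}(s))\ge \rho(\alpha_j^{\,*}(s))\,|\alpha_j^{\,*\,\prime}(s)|$, whence $\int g(\alpha_j^{\,*}(s))\,ds\ge\int_{\alpha_j}\rho\,|dx|\ge 1$ over its domain by (\ref{eq4A}) and $\rho\in{\rm adm}\,\Gamma$; summing over $j$ yields $\int_0^{l(\beta)}\sum_j g(\alpha_j^{\,*}(s))\,ds\ge m$. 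The multiplicity hypothesis says that for each $s$ every $x$ occurs among the values $\alpha_j^{\,*}(s)$ at most $i(x,f)$ times, so $\sum_j g^{\,q}(\alpha_j^{\,*}(s))\le H(\beta^{\,0}(s))$, and H\"older's inequality over the (at most $m$) indices gives $\sum_j g(\alpha_j^{\,*}(s))\le m^{1-1/q}\big(\sum_j g^{\,q}(\alpha_j^{\,*}(s))\big)^{1/q}\le m^{1-1/q}H(\beta^{\,0}(s))^{1/q}$. Combining, $\int_0^{l(\beta)}H(\beta^{\,0}(s))^{1/q}\,ds\ge m^{1/q}$, i.e. $\int_\beta\widetilde{\rho}\,|dy|\ge 1$; the general (non--closed) $\beta$ follows by taking suprema over closed subpaths. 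Hence $\widetilde{\rho}\in{\rm adm}\,(\Gamma^{\,\prime}\setminus\Gamma_0)$ with $M_q(\Gamma_0)=0$, so $M_q(\Gamma^{\,\prime})\le\int_{f(D)}\widetilde{\rho}^{\,q}(y)\,dm(y)=m^{-1}\int_{f(D)}H(y)\,dm(y)$.

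It remains to identify $\int_{f(D)}H\,dm$ with $\int_D K_{I,q}(x,f)\rho^{\,q}(x)\,dm(x)$. Applying the change of variables 3.2.5 of \cite{Fe} on each $B_k$ exactly as in (\ref{eq7.3.14}) gives $\int_{f(D)}\sum_{x}g^{\,q}(x)\,dm(y)=\int_D K_{I,q}(x,f)\rho^{\,q}(x)\,dm(x)$ with geometric multiplicity. The point to be settled is that the index weight may be dropped, i.e. that $\sum_{x\in f^{-1}(y)\cap(D\setminus B_0)}i(x,f)g^{\,q}(x)=\sum_{x\in f^{-1}(y)\cap(D\setminus B_0)}g^{\,q}(x)$ for a.e. $y\in f(D)$; this holds once one knows that the set of $x\in D\setminus B_0$ with $i(x,f)>1$ has image of Lebesgue measure zero, which is where the $N$-- and $N^{-1}$--properties together with $J(x,f)\ne0$ enter. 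Granting this, $\int_{f(D)}H\,dm=\int_D K_{I,q}\rho^{\,q}\,dm$ and the desired inequality $M_q(\Gamma^{\,\prime})\le\frac1m\int_D K_{I,q}(x,f)\rho^{\,q}(x)\,dm(x)$ follows.

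The main obstacle is precisely this last reconciliation of the topological index $i(x,f)$, which appears in the multiplicity hypothesis and is indispensable for the admissibility estimate, with the analytic (geometric) multiplicity delivered by the area formula, from which it must disappear. The H\"older step and the bookkeeping of liftings whose images are proper subpaths of $\beta$ are routine; the measure--theoretic statement that the branch locus has null image in this generalized class (classical for quasiregular maps) is the delicate ingredient I would need to secure.
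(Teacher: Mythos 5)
Your argument is essentially correct and structurally parallel to the paper's proof (the same decomposition $B_k$ from Proposition \ref{pr1}, the same exceptional families discarded via Propositions \ref{pr2} and \ref{pr3}, the same derivative estimate along the $f$--representations $\alpha_j^{\,*}$, and the same change of variables 3.2.5 of Federer), but your test function is genuinely different. The paper takes $\widetilde{\rho}(y)=\frac1m\,\chi_{f(D\setminus B_0)}(y)\sup_C\sum_{x\in C}\rho^{\,*}(x)$ with $C$ ranging over subsets of $f^{-1}(y)\cap(D\setminus B_0)$ of cardinality at most $m$, so the topological index never enters the test function; admissibility is obtained from the assertion that for a.e.\ $t$ the points $\alpha_j^{\,*}(t)$, $j\in J_t$, are distinct, and the H\"older inequality (\ref{equa16}) is applied only at the final integration stage to compare $\widetilde{\rho}^{\,q}$ with $\frac1m\sum_k\rho_k^q$. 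You instead weight the preimage sum by $i(x,f)$, take a $q$--th root, and apply H\"older pointwise along the curve; this gives a cleaner admissibility computation but forces you to strip the weight back out when matching $\int_{f(D)}H\,dm$ with $\int_D K_{I,q}(x,f)\rho^q\,dm$ via the area formula. The ``delicate ingredient'' you flag --- that the set of $x\in D\setminus B_0$ with $i(x,f)>1$ has null image --- is precisely the fact the paper uses silently when it asserts, right after introducing (\ref{equa9})--(\ref{equa10}), that the $\alpha_j^{\,*}(t)$ are distinct for a.e.\ $t$: the multiplicity hypothesis only bounds the number of coincident indices by $i(x,f)$, so distinctness there also requires $i(x,f)=1$ at those points. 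Both proofs therefore rest on the standard fact that a sense--preserving mapping has local index $1$ at a point of differentiability with $J(x,f)\ne 0$ (hence on all of $\bigcup_k C_k^*$), combined with Proposition \ref{pr2} to ensure $\beta^0(t)\notin f(B_0)$ for a.e.\ $t$. So your route is sound, and the single step you leave open is no worse than what the paper itself leaves implicit; it is closed by that local--index lemma rather than by any measure estimate on a genuine branch set.
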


{\it Proof.} Let $B_0$ and $C_{k}^{*}$  be as in Proposition
\ref{pr1}.  Setting by induction $B_1\,=\,C_{1}^{*},$
$B_2\,=\,C_{2}^{*}\setminus B_1\ldots\,,$
$$B_k\quad=\quad C_{k}^{*}\setminus \bigcup\limits_{l=1}^{k-1}B_l\,,$$
we obtain a  countable covering of $D$ consisting of mutually
disjoint Borel sets $B_k,$ $k=1,2,\ldots,$ with $m(B_0)=0,$
$B_0:=D\setminus\bigcup\limits_{k=1}^{\,\infty}B_k.$ By the
construction and $N$--pro\-per\-ty, $m\left(f(B_0)\right)=0.$

Thus $\widetilde{\gamma}^{\,0}(s)\not\in f(B_0)$ for a.e. $s$ and
$q$--a.e. closed curves $\widetilde{\gamma}$ in $f(D)$  by
Proposition \ref{pr2}; here $\widetilde{\gamma}^{\,0}(s)$ is a
normal representation of $\widetilde{\gamma}(s).$ Besides that a
curve $\gamma^{\,*}$, which  is the $f$--re\-p\-re\-sen\-ta\-tion of
$\gamma$,  is absolutely continuous for $q$--a.e.
$\widetilde{\gamma}=f\circ\gamma.$ Here the
$f$--re\-p\-re\-sen\-ta\-tion $\gamma^{\,*}$ of $\gamma$ is
well--defined for $q$--a.e. curves $\widetilde{\gamma}=f\circ\gamma$
(see Proposition \ref{pr3}).

Now let $\Gamma_1$ be a family of all (locally rectifiable) curves
$\gamma_1\in \Gamma^{\,\prime}$ for which there exists a closed
subcurve $\beta_1,$ $\beta_1=f\circ\alpha_1,$ such that the
$f$--re\-p\-re\-sen\-ta\-tion $\alpha_1^{\,*}$ of $\alpha_1$ either
is not rectifiable or is not absolutely continuous. Denote by
$\Gamma_2$ the family of all closed curves $\gamma_2,$
$\gamma_2=f\circ\alpha_2,$ such that the
$f$--re\-p\-re\-sen\-ta\-tion $\alpha_2^{\,*}$ of $\alpha_2$ either
is not rectifiable or is not absolutely continuous. We have proved
that $M_q(\Gamma_2)=0.$ On the other hand $\Gamma_1>\Gamma_2,$
consequently, $M_q(\Gamma_1)\le M_q(\Gamma_2)=0.$

Let $\rho\,\in\,{\rm  \,adm}\,\Gamma$ and
\begin{equation}\label{equa9}
\widetilde{\rho}(y)\quad=\quad\frac{1}{m}\cdot
\chi_{f\left(D\setminus B_0
\right)}(y)\sup\limits_{C}\sum\limits_{x\,\in\,C}\rho^*(x)\,,
\end{equation}
where
$$\rho^*(x)\,=\,\left\{\begin{array}{rr}
\rho(x)/l\left(f^{\prime}(x)\right), &   x\in D\setminus B_0,\\
0,  &  x\in B_0
\end{array}
\right.$$
and $C$ runs over all subsets of $f^{-1}(y)$ in $D\setminus B_0$
such that ${\rm card}\,C\le m.$ Note that
\begin{equation}\label{equa10}
\widetilde{\rho}(y)\quad=\quad\frac{1}{m}\cdot
\sup\sum\limits_{i=1}^s \rho_{k_i}(y)\,,
\end{equation}
where $\sup$ in (\ref{equa10}) is taken over all
$\left\{k_{i_1},\ldots,k_{i_s}\right\}$  such that $k_i\in\,{\Bbb
N},$ $k_i\ne k_j$ if $i\ne j,$ all $s\le m$  and
$$
\rho_k(y)\,=\left\{\begin{array}{rr}
\,\rho^{*}\left(f_k^{-1}(y)\right), &   y\in f(B_k),\\
0,  &  y\notin f(B_k)
\end{array}
\right. \,,$$
where $f_k=f|_{B_k},$ $k=1,2,\ldots\,$ is injective and $f(B_k)$ is
Borel. Thus, the function  $\widetilde{\rho}(y)$ is Borel, see e.g.
2.3.2 in \cite{Fe}. 

Suppose that $\beta$ is a curve in $\Gamma^{\,\prime}.$ There exist
curves $\alpha_1,\ldots,\alpha_m$ in $\Gamma$ such that $f\circ
\alpha_j\subset \beta$ and for all $x\in D$ and $t$ the equality
$\alpha_j(t)=x$ holds for at most $i(x,f)$ indices $j.$ We show that
$\widetilde{\rho}\,\in\,{\rm }\,\,{\rm
adm}\,\Gamma^{\,\prime}\setminus \Gamma_0,$ where $M_q(\Gamma_0)=0.$
Without loss of generality we may consider that all of the curves
$\beta$ of $\Gamma^{\,\prime}$ are locally rectifiable, see Section
6 in \cite{Va$_1$}, p. 18. Now we suppose that $\beta$ is closed. We
may consider that $\beta^0(t)\not\in f(B_0)$ for a.e. $t\in [0,
l(\beta)],$ where $\beta^0:[0, l(\beta)]\rightarrow {\Bbb R}^n$ is a
normal representation of $\beta,$ i.e., $\beta(t)=\beta^0\circ
l_{\beta}(t).$ Denote by $\alpha_j^{\,*}(t):I_j\rightarrow D$ the
corresponding $f$--rep\-re\-sen\-ta\-tion of $\alpha_j$ with respect
to $\beta,$ i.e., $\alpha_j(t)=\alpha^*_j\circ l_{\beta}(t),$
$f\circ \alpha^*_j\subset \beta^0.$ Let
$$h_j(t)=\rho^{\,*}\left(\alpha^*_j(t)\right)\chi_{I_j}(t)\,,\quad t\in [0, l(\beta)]\,,\quad
J_t:=\{j:t\in I_j\}\,.$$ Since $\beta^0(t)\not\in f(B_0)$ for a.e.
$t\in [0, l(\beta)],$ the points $\alpha^{\,*}_j(t)\in
f^{\,-1}(\beta^0(t)),$ $j\in J_t,$ are distinct for a.e. $t.$ By the
definition of $\widetilde{\rho}$ in (\ref{equa9}),
\begin{equation}\label{eq6.1}
\widetilde{\rho}(\beta^0(t))\ge \frac{1}{m}\cdot\sum\limits_{j=1}^m
h_j(t)
\end{equation}
for a.e. $t\in [0, l(\beta)].$
By (\ref{eq6.1}) we have that
$$\int\limits_{\beta}\widetilde{\rho}(y)|dy|=\int\limits_{0}^{l(\beta)}
\widetilde{\rho}(\beta^0(t))dt\ge
$$
$$\ge \frac{1}{m}\cdot\sum\limits_{j=1}^m
\int\limits_{0}^{l(\beta)}h_j(t) dt=
\frac{1}{m}\cdot\sum\limits_{j=1}^m
\int\limits_{I_j}\rho^{\,*}\left(\alpha^*_j(t)\right)dm_1(t)\,.$$
%
Now we show that
\begin{equation}\label{eq6.4}
\int\limits_{I_j}\rho^{\,*}\left(\alpha^*_j(t)\right)dm_1(t)\ge 1
\end{equation}
for $q$--a.e. curve $\beta\in \Gamma^{\,\prime}.$ Since
$\beta^{\,0}(t)$ is rectifiable, $\beta^{\,0}(t)$ is differentiable
for a.e. $t\in I.$ Besides that, the curve $\alpha_j^{\,*}$ from the
$f$--re\-p\-re\-sen\-ta\-tion of $\beta$ is absolutely continuous
for $q$--a.e. $\beta$ by Proposition \ref{pr3}. Since
$\beta^0(t)\not\in f(B_0)$ for a.e. $t\in [0, l(\beta)],$ we have
$\alpha_j^{\,*}(t)\not\in B_0$ at a.e. $t\in I_j.$ Thus, the
derivatives $f^{\,\prime}\left(\alpha_j^{\,*}(t)\right)$ and
$\alpha_j^{\,*\prime}(t)$ exist for a.e. $t\in I_j.$ Taking into
account the formula of the derivative of the superposition of
functions, and that the modulus of the derivative of the curve by
the natural parameter equals 1, we have
$$1=\left|\left(f\circ \alpha_j^{\,*}\right)^{\,\prime}(t)\right|=
\left|f^{\,\prime}\left(\alpha_j^{\,*}(t)\right)\alpha_j^{\,*\prime}(t)\right|=$$
\begin{equation}\label{eq6.6}
=\left|f^{\,\prime}\left(\alpha_j^{\,*}(t)\right)\cdot\frac{\alpha_j^{\,*\prime}(t)}{|\alpha_j^{\,*\prime}(t)|}\right|\cdot
|\alpha_j^{\,*\prime}(t)|\ge
l\left(f^{\,\prime}\left(\alpha_j^{\,*}(t)\right)\right)\cdot
|\alpha_j^{\,*\prime}(t)|\,.\end{equation}
It follows from (\ref{eq6.6}) that
\begin{equation}\label{eq6.5}
\rho^{\,*}\left(\alpha^*_j(t)\right)=\frac{\rho(\alpha^*_j(t))}
{l\left(f^{\,\prime}\left(\alpha_j^{\,*}(t)\right)\right)}\ge
\rho(\alpha^*_j(t))\cdot |\alpha_j^{\,*\prime}(t)|\,.
\end{equation}
From (\ref{eq6.5}) by absolutely continuously of $\alpha_j^{\,*},$
definition of $\rho$ and Theorem 4.1 in \cite{Va$_1$} we have
\begin{equation}\label{eq6.7}
1\le
\int\limits_{\alpha_j}\rho(x)|dx|=\int\limits_{I_j}\rho\left(\alpha^*_j(t)\right)
\cdot |\alpha_j^{\,*\prime}(t)|\,dm_1(t)\le
\int\limits_{I_j}\rho^{\,*}\left(\alpha^*_j(t)\right)dm_1(t)\,.
\end{equation}
Now inequality (\ref{eq6.4}) directly follows from (\ref{eq6.7}).
Next we have that $\int\limits_{\beta}\widetilde{\rho}(y)|dy|\ge 1$
for $q$--a.e. closed curve $\beta$ of $\Gamma^{\,\prime}.$ The case
of  the arbitrary curve $\beta$ can be got from the taking of $\sup$
in $\int\limits_{\beta^{\,\prime}}\widetilde{\rho}(y)|dy|\ge 1$ over
all closed subcurves $\beta^{\,\prime}$ of $\beta.$
Thus, $\widetilde{\rho}\,\in\,{\rm }\,\,{\rm
adm}\,\Gamma^{\,\prime}\setminus \Gamma_0,$ where $M_q(\Gamma_0)=0.$
Hence
\begin{equation}\label{equa13}
M_q\left(\Gamma^{\,\prime\,}\right)\quad
\le\quad\int\limits_{f(D)}\widetilde{\rho}\,^q(y)\,\,dm(y)\,.
\end{equation}
By 3.2.5 for $m=n$ in \cite{Fe}, we obtain that
\begin{equation}\label{equa14}
\int\limits_{B_k}K_{I,
q}(x,\,f)\cdot\rho^q(x)\,\,dm(x)\quad=\quad\int\limits_{f(D)}
\rho^q_k(y)\,dm(y)\,.
\end{equation}
By H\"{o}lder inequality for series,
\begin{equation}\label{equa16}
\left(\frac{1}{m}\cdot\sum\limits_{i=1}^{s}\rho_{k_i}(y)\right)^q\quad\le\quad
\frac{1}{m}\cdot \sum\limits_{i=1}^{s}\,\rho^q_{k_i}(y)
\end{equation}
for each $1 \le s \le m$ and every $k_1,\ldots,k_s,$ $k_i\in {\Bbb
N},$ $i=1,2,\ldots,$ $k_i\ne k_j$ if $i\ne j.$

Finally, by Lebesgue positive convergence theorem, see Theorem
I.12.3 in \cite{Sa}, we conclude from (\ref{equa13})--(\ref{equa16})
that
%
$$\frac{1}{m}\cdot\int\limits_{D}K_{I,
q}(x,\,f)\cdot\rho^q(x)\,\,dm(x)\quad
=\quad\frac{1}{m}\cdot\int\limits_{f\left(D\right)}\,\sum\limits_{k=1}^{\infty}
\rho_k^q(y)\,dm(y)\quad\ge$$
%
$$\ge\quad\frac{1}{m}\cdot\int\limits_{f\left(D\right)}
\sup\limits_{\left\{k_1,\ldots,k_s\right\},\, k_i\in {\Bbb N},\atop
k_i\ne k_j \,{\rm if}\, i\ne j}\sum\limits_{i=1}^s
\rho^q_{k_i}(y)\,dm(y)\quad\ge\quad
\int\limits_{f\left(D\right)}\,\widetilde{\rho}^{\,q}(y)\,dm(y)\quad\ge\quad
M_q(\Gamma^{\,\prime\,})\,.$$
The proof is complete. $\Box$

\section{Applications}

\setcounter{equation}{0}

Let $f:D \rightarrow {\Bbb R}^n$ be a discrete open mapping, $\beta:
[a,\,b)\rightarrow {\Bbb R}^n$ be  a curve and
$x\in\,f^{-1}\left(\beta(a)\right).$ A curve $\alpha:
[a,\,c)\rightarrow D$ is called a {\it maximal $f$--lifting} of
$\beta$ starting at $x,$ if $(1)\quad \alpha(a)=x\,;$ $(2)\quad
f\circ\alpha=\beta|_{[a,\,c)};$ $(3)$\quad if $c<c^{\prime}\le b,$
then there is no curve $\alpha^{\prime}: [a,\,c^{\prime})\rightarrow
D$ such that $\alpha=\alpha^{\prime}|_{[a,\,c)}$ and $f\circ
\alpha^{\,\prime}=\beta|_{[a,\,c^{\prime})}.$  By assumption on $f$ one implies that every  curve $\beta$ with $x\in
f^{\,-1}\left(\beta(a)\right)$ has  a maximal $f$--lif\-ting
starting at the point $x,$ see Corollary II.3.3 in \cite{Ri}.

\medskip
Let $x_1,\ldots,x_k$ be $k$ different points of
$f^{-1}\left(\beta(a)\right)$ and let
$$m=\sum\limits_{i=1}^k i(x_i,\,f)\,.$$
We say that the sequence $\alpha_1,\dots,\alpha_m$ is a {\it maximal
sequence of $f$--liftings of $\beta$ starting at points
$x_1,\ldots,x_k,$} if

$(a)$\quad each $\alpha_j$ is a maximal $f$--lifting of $\beta,$

$(b)\quad {\rm card}\,\left\{j:a_j(a)=x_i\right\}= i(x_i,\,f),\quad
1\le i\le k\,,$

$(c)\quad {\rm card}\,\left\{j:a_j(t)=x\right\}\le i(x,\,f)$ for all
$x\in D$ and for all $t.$

Let $f$ be a discrete open mapping and $x_1,\ldots,x_k$ be distinct
points in $f^{\,-1}\left(\beta(a)\right).$ Then $\beta$ has a
maximal sequence of $f$--lif\-tings starting at the points
$x_1,\ldots,x_k,$ see Theorem II.3.2 in \cite{Ri}.

\medskip
A domain $G\subset D,$ $\overline{G}\subset D,$ is said to be a {\it
normal domain of $f,$} if $\partial f(G)=f\left(\partial G\right).$
If $G$ is a normal domain, then $\mu(y, f, G)$ is a constant for
$y\in f(G).$ This constant will be denoted by $\mu(f, G).$ Let
$f:D\rightarrow {\Bbb R}^n$ be a discrete open mapping, then $\mu(f,
G)=N(f,G)$ for every normal domain $G\subset D,$  see e.g.
Proposition I.4.10 in \cite{Ri}. We need bellow the following
statement, see Corollary II.3.4 in \cite{Ri}.

\medskip
\begin{lemma}\label{lem2.3}
{\sl\, Let $f:G\rightarrow {\Bbb R}^n$ be a discrete open mapping,
$G$  be a normal domain, $m=N(f,G),$ $\beta:[a,b)\rightarrow f(G)$ be a
curve. Then there exist curves $\alpha_j:[a,b)\rightarrow G,$ $1\le
j\le m,$ such that:

1)\quad $f\circ \alpha_j\,=\,\beta,$

2)\quad ${\rm card}\,\left\{j:\alpha_j(t)=x\right\}= i(x,\,f)$ for
$x\in\,G\cap f^{-1}(\beta(t)),$

3)\quad $|\alpha_1|\cup\ldots\cup|\alpha_m|=G\cap f^{-1}(|\beta|).$}
\end{lemma}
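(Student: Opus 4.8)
\proof The plan is to produce the $\alpha_j$ as a maximal sequence of $f$--lif\-tings of $\beta$ starting at the \emph{entire} preimage of $\beta(a)$ in $G,$ and then to use normality of $G$ to force each lifting to survive all the way to $b.$ First I would record the counting identity that is available here: since $f$ is discrete and open and $G$ is a normal domain, $\mu(f,G)=N(f,G)=m$ by Proposition I.4.10 in \cite{Ri}, and for \emph{every} $y\in f(G)$ one has $\sum_{x\in G\cap f^{-1}(y)}i(x,f)=\mu(f,G)=m.$ In particular, writing $x_1,\ldots,x_k$ for the distinct points of $G\cap f^{-1}(\beta(a)),$ we obtain $\sum_{i=1}^{k}i(x_i,f)=m.$ By Theorem II.3.2 in \cite{Ri} there then exists a maximal sequence $\alpha_1,\ldots,\alpha_m$ of $f$--liftings of $\beta$ starting at $x_1,\ldots,x_k,$ so that conditions $(a)$--$(c)$ in the definition of a maximal sequence hold; at this stage each $\alpha_j$ is a maximal $f$--lifting defined only on some $[a,c_j)\subset[a,b).$

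The hard part will be showing that $c_j=b$ for every $j,$ i.e.\ that no lifting terminates prematurely; this is exactly where the hypothesis that $G$ is normal is essential. I would argue by contradiction: suppose $c_j<b.$ By maximality of $\alpha_j,$ the lifting cannot be prolonged past $c_j,$ and by the construction of maximal liftings (Corollary II.3.3 in \cite{Ri}) this is possible only if the cluster set of $\alpha_j(t)$ as $t\to c_j$ is contained in $\partial G$ rather than in $G$ (otherwise a local lifting at a limit point would extend $\alpha_j$). Applying $f$ and using the normality relation $f(\partial G)=\partial f(G),$ the image of that cluster set lies in $\partial f(G).$ On the other hand $f\circ\alpha_j=\beta$ on $[a,c_j),$ so $f(\alpha_j(t))=\beta(t)\to\beta(c_j)\in f(G);$ since $f(G)$ is open, $\beta(c_j)$ is an interior point of $f(G)$ and hence cannot belong to $\partial f(G).$ This contradiction gives $c_j=b,$ so all of $\alpha_1,\ldots,\alpha_m$ are defined on $[a,b)$ with $f\circ\alpha_j=\beta,$ which is assertion~1).

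It remains to verify 2) and 3), and here the index identity does the bookkeeping. Fix $t\in[a,b).$ Each of the $m$ liftings contributes a point $\alpha_j(t)\in G\cap f^{-1}(\beta(t)),$ so $\sum_{x\in G\cap f^{-1}(\beta(t))}{\rm card}\,\{j:\alpha_j(t)=x\}=m,$ while condition $(c)$ gives ${\rm card}\,\{j:\alpha_j(t)=x\}\le i(x,f)$ with $\sum_{x\in G\cap f^{-1}(\beta(t))}i(x,f)=m.$ Since the total equals $m$ and the term-by-term bound sums to $m,$ equality must hold in each term, which is precisely assertion 2). Assertion 3) is then immediate: for each $t$ the points $\alpha_j(t)$ run through \emph{all} of $G\cap f^{-1}(\beta(t))$ (each with the full multiplicity $i(x,f)$), so $\bigcup_{j=1}^{m}|\alpha_j|=\bigcup_{t}\{\alpha_j(t)\}=G\cap f^{-1}(|\beta|).$ I expect the only genuinely delicate point to be the completeness argument of the second paragraph; the counting in the last paragraph is routine once the normal-domain index identity is in hand. $\Box$
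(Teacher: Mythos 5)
Your proof is correct and is essentially the argument the paper relies on: the paper gives no proof of this lemma at all, citing it as Corollary II.3.4 of Rickman's book, and your route (maximal sequence of liftings from Theorem II.3.2, normality of $G$ forcing each lifting to persist up to $b,$ and the index identity $\sum_{x\in G\cap f^{-1}(y)}i(x,f)=\mu(f,G)=N(f,G)$ to upgrade the inequalities of condition $(c)$ to the equalities in 2)) is exactly Rickman's. The only step stated a little loosely is the completeness argument: a cluster point of $\alpha_j(t)$ in $G$ does not by itself permit extension, since $\alpha_j(t)$ need not converge to it --- one first observes that the cluster set is a connected subset of $\overline{G}\cap f^{-1}(\beta(c_j)),$ that normality excludes its intersection with $\partial G,$ and that discreteness of $f$ then forces it to be a single point of $G$ to which $\alpha_j(t)$ actually converges, after which Corollary II.3.3 extends the lifting and yields the contradiction.
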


\medskip
We also adopt the following conventions. Given $q\ge 1,$ a family of
curves $\Gamma$ in ${\Bbb R}^n,$ denote
$$M_{q, K_{I, q}(\cdot,\,f)}(\Gamma)\,=\, \inf\limits_{\rho\,\in\,{\rm adm}\,\Gamma }
\int\limits _{{\Bbb R}^n}\,\rho^q(x)\,K_{I, q}(x,f)\,dm(x)\,.$$

\medskip
The following result holds.

\medskip
\begin{corollary}{}\label{th4.1}{\sl\,
Let $f:D\rightarrow {\Bbb R}^n$ be an open discrete mapping, which
is differentiable a.e. in $D,$ have  $N$-- and
$N^{-1}$--pro\-per\-ties, and $L^{(2)}_q$--pro\-per\-ty. Suppose
that $G$ is a normal domain for $f,$ $\Gamma^{\,\prime}$ is  a curve
family in $G^{\,\prime}=f(G)$ and  $\Gamma$ is  a family consisting of
all curves $\alpha$ in $G$ such that $f\circ \alpha\subset
\Gamma^{\,\prime}$ and $m=N(f,G).$
Then
$$M_q(\Gamma^{\,\prime})\quad\le\quad\frac{1}{N(f,G)}\quad\int\limits_G
K_{I, q}\left(x,\,f\right)\cdot \rho^q (x)\ \ dm(x)$$
for every $\rho \in {\rm \,adm}\,\Gamma.$ Moreover,
$$M_q(\Gamma^{\,\prime})\quad\le\quad\frac{1}{N(f,G)}\quad
M_{q, K_{I, q}(\cdot,\,f)}(\Gamma)\,.$$}
\end{corollary}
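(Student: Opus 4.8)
The plan is to read this off from Theorem \ref{th3.1} with the integer $m$ taken to be $N(f,G)$, the needed system of liftings being supplied by Lemma \ref{lem2.3}. First I would pass to the restriction $f|_G:G\rightarrow{\Bbb R}^n$. Since $f$ is open and discrete on $D$, so is $f|_G$, and $G$ is by hypothesis a normal domain, so that $\mu(f,G)=N(f,G)=:m$ is a well-defined positive integer. The hypotheses required by Theorem \ref{th3.1}---differentiability a.e., the $N$-- and $N^{-1}$--properties and the $L^{(2)}_q$--property---are inherited by $f|_G$ from $f$: the pointwise conditions restrict trivially, while for $L^{(2)}_q$ one notes that any lifting in $G$ of a curve in $f(G)$ is also a lifting in $D$, so the family of curves in $f(G)$ violating the lifting condition is a subfamily of the corresponding $q$--modulus zero family in $f(D)$ and hence has $q$--modulus zero by monotonicity. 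I would also record that $K_{I,q}(x,f)$ depends only on $f^{\,\prime}(x)$, so it is unchanged upon restriction to $G$.

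Next I would check the combinatorial hypothesis of Theorem \ref{th3.1}. Given any curve $\beta\in\Gamma^{\,\prime}$ in $G^{\,\prime}=f(G)$, Lemma \ref{lem2.3} furnishes curves $\alpha_1,\dots,\alpha_m$ in $G$ with $f\circ\alpha_j=\beta$ and ${\rm card}\{j:\alpha_j(t)=x\}=i(x,f)$ for every $x\in G\cap f^{-1}(\beta(t))$. Since $f\circ\alpha_j=\beta$ is a subpath of $\beta\in\Gamma^{\,\prime}$, each $\alpha_j$ belongs to $\Gamma$ by the very definition of $\Gamma$; and because $\alpha_j(t)$ always lies in $G\cap f^{-1}(\beta(t))$, the sharp cardinality identity downgrades to ${\rm card}\{j:\alpha_j(t)=x\}\le i(x,f)$ for all $x\in G$ and all $t$. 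This is precisely the hypothesis of Theorem \ref{th3.1} for this value of $m$, so the theorem delivers
$$M_q(\Gamma^{\,\prime})\le\frac{1}{N(f,G)}\int\limits_G K_{I,q}(x,f)\,\rho^q(x)\,dm(x)$$
for every $\rho\in{\rm adm}\,\Gamma$, which is the first assertion.

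For the second assertion I would simply pass to the infimum. Since the displayed bound holds for every $\rho\in{\rm adm}\,\Gamma$ and $K_{I,q}(\cdot,f)\ge 0$, one has $\int_G K_{I,q}\rho^q\,dm\le\int_{{\Bbb R}^n}K_{I,q}\rho^q\,dm$, and taking the infimum over all admissible $\rho$ yields $M_q(\Gamma^{\,\prime})\le\frac{1}{N(f,G)}M_{q,K_{I,q}(\cdot,f)}(\Gamma)$ directly from the definition of $M_{q,K_{I,q}(\cdot,f)}(\Gamma)$.

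I do not expect any genuine analytic obstacle here: the heavy work (the $L^{(2)}_q$ machinery and the modulus estimate itself) is already encapsulated in Theorem \ref{th3.1}, and the rest is bookkeeping. The two points to get right are that the maximal sequence of Lemma \ref{lem2.3} simultaneously realizes $f\circ\alpha_j\subset\beta$ and the multiplicity constraint, and that the integral can be confined to $G$ rather than all of $D$. The latter is the only place to be careful: one may restrict $f$ to $G$ as above, or, applying Theorem \ref{th3.1} on the whole of $D$ with the modified density $\rho\cdot\chi_G\in{\rm adm}\,\Gamma$, observe that $\int_D K_{I,q}(\rho\chi_G)^q\,dm=\int_G K_{I,q}\rho^q\,dm$, which gives the same conclusion without invoking the inheritance argument.
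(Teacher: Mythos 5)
Your proposal is correct and follows exactly the route the paper takes: the paper's entire proof is the remark that the corollary ``directly follows from Theorem \ref{th3.1} and Lemma \ref{lem2.3}'', and you have simply supplied the details of that derivation (taking $m=N(f,G)$, using the maximal sequence of liftings from Lemma \ref{lem2.3} to verify the multiplicity hypothesis, confining the integral to $G$ via $\rho\cdot\chi_G$, and passing to the infimum for the weighted-modulus form). No discrepancy with the paper's argument.
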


The proof directly follows from Theorem \ref{th3.1} and  Lemma
\ref{lem2.3}. $\Box$

\medskip
Following section II.10 in \cite{Ri}, a {\it condenser} is a pair
$E=(A, C)$ where $A\subset {\Bbb R}^n$ is open and $C$ is non--empty
compact set contained in $A.$ A condenser $E=(A, C)$ is said to be
in a domain $G$ if $A\subset G.$ For a given condenser
$E=\left(A,\,C\right)$ and $q\ge 1,$ we set
%
$${\rm cap}_q\,E\quad=\quad{\rm
cap}_q\,\left(A,\,C\right)\quad=\quad\inf \limits_{u\in
W_0\left(E\right) }\quad\int\limits_{A}\,|\nabla u|^q dm(x)$$
%
%
where $W_0(E)=W_0(A,\,C)$ is the family of all non--negative
functions $u:A\rightarrow {\Bbb R}^1$ such that (1)\quad $u\in
C_0(A),$\quad(2)\quad $u(x)\ge 1$ for $x\in C,$ and (3)\quad $u$ is
$ACL.$ In the above formula
$|\nabla u|={\left(\sum\limits_{i=1}^n\,{\left(\partial_i
u\right)}^2 \right)}^{1/2},$ and ${\rm cap}_q\,E$ is called {\it
$q$--ca\-pa\-ci\-ty} of the condenser $E,$ see Section II.10 in
\cite{Ri}.

\medskip
Let $E=(A,C)$ be a condenser and $\omega$ be  a nonnegative
measurable function.  We define the {\it $\omega$--weighted
capacity} of $E$ by setting
\begin{equation}\label{equa19}
{\rm cap}_{q, \omega}\,E\,=\,{\rm cap}_{q, \omega}\,(A,C)\,=\,\inf\,
\int\limits_{A}\,|\nabla u(x)|^q\,\omega(x)\,dm(x)\,,
\end{equation}
where $\inf$ in (\ref{equa19}) is taken over all functions $u\in
C_0^{\infty}(A)$ and $u\ge 1$ on $C.$

Given a mapping $f:D\rightarrow {\Bbb R}^n$ and a condenser
$E=(A,C),$ we call
$$M(f,C)\quad=\quad \inf\limits_{y\,\in\, f(C)} \sum\limits_{x\,\in\, f^{\,-1}(y)\cap C}
\,i(x,f)$$
the {\it minimal multiplicity} of $f$ on $C.$

\medskip
We need the following statement, see Proposition II.10.2 in
\cite{Ri}.
\begin{lemma}\label{lem2.2}
{\sl\, Let $E=(A,\,C)$ be a condenser in ${\Bbb R}^n$ and let
$\Gamma_E$ be the family of all curves of the form
$\gamma:[a,\,b)\rightarrow A$ with $\gamma(a)\in C$ and
$|\gamma|\cap\left(A\setminus F\right)\ne\varnothing$ for every
compact $F\subset A.$ Then ${\rm
cap}_q\,E=M_q\left(\Gamma_E\right).$}
\end{lemma}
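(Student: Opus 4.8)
The plan is to prove the two inequalities ${\rm cap}_q\,E\le M_q(\Gamma_E)$ and $M_q(\Gamma_E)\le {\rm cap}_q\,E$ separately, following the classical identification of capacity with the modulus of the connecting curve family.

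For $M_q(\Gamma_E)\le {\rm cap}_q\,E$ I would start from an arbitrary $u\in W_0(E)$ and set $\rho=|\nabla u|$, extended by zero outside $A.$ The claim is that $\rho\in{\rm adm}\,\Gamma_E.$ Indeed, for $q$--a.e. curve $\gamma:[a,b)\rightarrow A$ in $\Gamma_E$ the composition $u\circ\gamma$ is absolutely continuous, by the standard fact that a function with $q$--integrable gradient is absolutely continuous along $q$--a.e. curve (Fuglede's theorem). Since $\gamma(a)\in C$ we have $u(\gamma(a))\ge 1,$ while because $u$ has compact support in $A$ and $|\gamma|$ meets $A\setminus F$ for every compact $F\subset A,$ the curve eventually leaves ${\rm supp}\,u,$ so $u(\gamma(t))\rightarrow 0$ as $t\rightarrow b.$ The fundamental theorem of calculus along $\gamma$ then yields $\int_\gamma|\nabla u|\,|dx|\ge u(\gamma(a))-\lim_{t\rightarrow b}u(\gamma(t))\ge 1.$ After discarding the exceptional family of $q$--modulus zero, $\rho$ is admissible, whence $M_q(\Gamma_E)\le\int_A|\nabla u|^q\,dm(x);$ taking the infimum over $u$ gives the inequality.

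For the reverse inequality ${\rm cap}_q\,E\le M_q(\Gamma_E)$ I would fix $\rho\in{\rm adm}\,\Gamma_E$ and construct the potential
$$u(x)=\min\left\{1,\ \inf_\sigma\int_\sigma\rho\,|dx|\right\},$$
where $\sigma$ ranges over curves in $A$ issuing from $x$ and escaping every compact subset of $A.$ For $x\in C$ every competing $\sigma$ lies in $\Gamma_E,$ so the inner infimum is $\ge 1$ and $u\equiv 1$ on $C;$ near $\partial A$ the $\rho$--distance to the ideal boundary tends to $0,$ so $u$ vanishes there and may be taken compactly supported in $A.$ Being an infimum of $\rho$--lengths, $u$ satisfies $|u(x)-u(y)|\le\int_{[x,y]}\rho\,|dx|$ along segments, whence $u$ is ACL with $|\nabla u|\le\rho$ a.e. Consequently $u\in W_0(E)$ and ${\rm cap}_q\,E\le\int_A|\nabla u|^q\,dm(x)\le\int_A\rho^q\,dm(x);$ the infimum over $\rho$ finishes the proof.

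The step I expect to be the main obstacle is the second direction: verifying that the potential $u$ genuinely belongs to the admissible class $W_0(E)$ — that it is continuous, compactly supported in $A,$ ACL, and satisfies $|\nabla u|\le\rho$ almost everywhere. The decisive device is to replace $\rho$ at the outset by a lower semicontinuous majorant whose integral is as close as desired to $\int_A\rho^q\,dm(x)$ (Vitali--Carath\'eodory), since with respect to such a majorant the $\rho$--distance potential is continuous and the gradient bound $|\nabla u|\le\rho$ is transparent; one then lets the majorant decrease to $\rho.$ That $u$ vanishes near $\partial A,$ so that $u\in C_0(A),$ is precisely where the defining escape property of the curves of $\Gamma_E$ enters.
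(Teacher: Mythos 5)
You should first be aware that the paper does not prove this lemma at all: it is quoted from Proposition II.10.2 of Rickman's book, which treats the conformal case $q=n$; for general $q\ge 1$ the equality ${\rm cap}_q\,E=M_q(\Gamma_E)$ is a theorem of Hesse (and, in full generality, Shlyk). So your proposal has to stand on its own. Your first direction, $M_q(\Gamma_E)\le{\rm cap}_q\,E$, is correct and standard: for $u\in W_0(E)$ with $\int_A|\nabla u|^q\,dm<\infty$, Fuglede's theorem makes $u$ absolutely continuous along $q$--a.e.\ curve, and since every $\gamma\in\Gamma_E$ starts in $C$ (where $u\ge 1$) and meets $A\setminus{\rm supp}\,u$, the oscillation of $u\circ\gamma$ is at least $1$; discarding the exceptional family of zero $q$--modulus gives the inequality. (You should argue via a parameter $t^*$ with $\gamma(t^*)\notin{\rm supp}\,u$ rather than via $\lim_{t\to b}u(\gamma(t))$, which need not exist, but this is cosmetic.)

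The second direction contains a genuine gap, exactly at the point you flag, and the device you propose does not close it. For an arbitrary admissible $\rho$ the potential $u(x)=\min\bigl\{1,\inf_\sigma\int_\sigma\rho\,|dx|\bigr\}$ need not belong to $W_0(E)$. First, it is not true that "near $\partial A$ the $\rho$--distance to the ideal boundary tends to $0$": an admissible $\rho$ is only a Borel function with values in $[0,\infty]$ and finite $L^q$--norm, and it can be so large near a portion of $\partial A$ (even $+\infty$ on a null set accumulating there) that $\inf_\sigma\int_\sigma\rho$ stays bounded away from $0$; the escape property of the curves of $\Gamma_E$ says nothing about the $\rho$--length of short escape routes from points near $\partial A$. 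Hence $u$ need not vanish near $\partial A$, and being small there is not the same as having compact support: the truncation $\max\{0,(u-\varepsilon)/(1-\varepsilon)\}$ is compactly supported only if $\{u>\varepsilon\}$ is compactly contained in $A$, which is the same unproved claim, while multiplying by a cutoff destroys the bound $|\nabla u|\le\rho$. Second, the Vitali--Carath\'eodory reduction to a lower semicontinuous majorant $\tilde\rho\ge\rho$ does not make $u$ continuous: the estimate $|u(x)-u(y)|\le\int_{[x,y]}\tilde\rho\,|dx|$ gives continuity only where $\tilde\rho$ is locally bounded, and an unbounded lsc majorant yields at best semicontinuity, whereas $W_0(E)$ requires $u\in C_0(A)$. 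These verifications are precisely the content of the Hesse--Shlyk theorem; the honest options are to cite that result, or to carry out the full approximation scheme (reduction to bounded continuous admissible metrics, or identification of ${\rm cap}_q$ with a Sobolev-type capacity for which your potential is admissible), neither of which is contained in your sketch.
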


\medskip
\begin{theorem}\label{th4.2}
{\sl\, Let $f:D\rightarrow {\Bbb R}^n$ be an open discrete mapping,
which is differentiable a.e. in $D,$ have  $N$-- and
$N^{-1}$--pro\-per\-ties, and $L^{(2)}_q$--pro\-per\-ty. Suppose that
$E=(A,C)$ is  a condenser in $D.$ Then
\begin{equation}\label{equa20}
{\rm cap}_q\,f(E)\quad \le \quad \frac{1}{M(f,C)}\,\,{\rm cap}_{q,
K_{I, q}(\cdot,\,f)}\,E\,.
\end{equation}}
\end{theorem}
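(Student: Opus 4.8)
The goal is to bound the $q$-capacity of the image condenser $f(E)=(f(A),f(C))$ by the weighted capacity of $E$, with the multiplicity factor $1/M(f,C)$. The natural strategy is to translate both capacities into moduli of curve families via Lemma \ref{lem2.2}, and then apply the V\"ais\"al\"a-type inequality of Theorem \ref{th3.1}. The plan is therefore to identify an appropriate curve family $\Gamma^{\,\prime}$ connecting $f(C)$ to the complement of $f(A)$ inside $f(A)$, so that ${\rm cap}_q\,f(E)=M_q(\Gamma^{\,\prime})$ by Lemma \ref{lem2.2}, and to identify a curve family $\Gamma$ in $A$ whose liftings feed the hypothesis of Theorem \ref{th3.1}.

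\medskip
\textbf{Choosing the curve families and verifying the lifting hypothesis.} Let $\Gamma^{\,\prime}=\Gamma_{f(E)}$ be the family from Lemma \ref{lem2.2} applied to the condenser $f(E)$, so every $\beta\in\Gamma^{\,\prime}$ is a curve $\beta:[a,b)\rightarrow f(A)$ with $\beta(a)\in f(C)$ and $|\beta|$ leaving every compact subset of $f(A)$. Let $\Gamma=\Gamma_E$ be the corresponding family for $E=(A,C)$. The key step is to check the compatibility condition of Theorem \ref{th3.1}: for each $\beta\in\Gamma^{\,\prime}$, I would set $m:=M(f,C)$ and, using the maximal sequence of $f$--liftings of $\beta$ starting at the points of $f^{-1}(\beta(a))\cap C$ (which exists since $f$ is open and discrete, by the lifting theory recalled before Lemma \ref{lem2.3}), extract curves $\alpha_1,\ldots,\alpha_m$ with $f\circ\alpha_j\subset\beta$. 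By the definition of $M(f,C)$ as an infimum over $y\in f(C)$ of $\sum_{x\in f^{-1}(y)\cap C}i(x,f)$, there are at least $m$ such liftings issuing from $C$, and property $(c)$ of a maximal sequence guarantees that for each $x\in D$ and each parameter value the equality $\alpha_j(t)=x$ holds for at most $i(x,f)$ indices $j$, which is precisely the multiplicity bound required by Theorem \ref{th3.1}. One must also confirm that each $\alpha_j$ lies in $\Gamma$, i.e.\ starts on $C$ and escapes every compact subset of $A$; the first is immediate from the starting points, and the second follows because the lifting is \emph{maximal} and $\beta$ itself escapes compacta of $f(A)$.

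\medskip
\textbf{Applying Theorem \ref{th3.1} and converting back to capacities.} Once the hypothesis is verified, Theorem \ref{th3.1} yields
$$
M_q(\Gamma^{\,\prime})\;\le\;\frac{1}{M(f,C)}\int\limits_{D}K_{I,q}(x,f)\cdot\rho^q(x)\,dm(x)
$$
for every $\rho\in{\rm adm}\,\Gamma$. Restricting to admissible $\rho$ supported in $A$ and taking the infimum over such $\rho$, the right-hand side becomes $\frac{1}{M(f,C)}\,M_{q,K_{I,q}(\cdot,f)}(\Gamma)$. By Lemma \ref{lem2.2}, $M_q(\Gamma^{\,\prime})={\rm cap}_q\,f(E)$ and $M_{q,K_{I,q}(\cdot,f)}(\Gamma)$ equals the weighted capacity ${\rm cap}_{q,K_{I,q}(\cdot,f)}\,E$, after identifying the modulus with capacity in the weighted setting exactly as in the unweighted Lemma \ref{lem2.2}. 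Combining these identifications produces the claimed inequality (\ref{equa20}).

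\medskip
\textbf{The main obstacle.} The delicate point is the interplay between the infimum defining $M(f,C)$ and the fixed integer $m$ demanded by Theorem \ref{th3.1}: the number of liftings starting in $C$ depends on the initial value $\beta(a)\in f(C)$, whereas Theorem \ref{th3.1} requires a single $m$ valid for \emph{all} $\beta\in\Gamma^{\,\prime}$. Taking $m=M(f,C)$ as the infimal multiplicity resolves this, since then at least $m$ liftings are always available from $C$; but I must be careful that selecting exactly $m$ of the maximal liftings preserves the multiplicity bound $(c)$, and that the weighted modulus/capacity equivalence of Lemma \ref{lem2.2} carries over verbatim to the weight $K_{I,q}(\cdot,f)$. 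Verifying this capacity-modulus identity in the weighted case, and ensuring the chosen liftings remain in $\Gamma_E$, is where the technical effort concentrates.
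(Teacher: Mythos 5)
Your proposal follows essentially the same route as the paper: both capacities are converted into moduli of the curve families $\Gamma_E$ and $\Gamma_{f(E)}$ from Lemma \ref{lem2.2}, the maximal sequence of $f$--liftings (Theorem II.3.2 in \cite{Ri}) starting at the points of $C\cap f^{-1}(\beta(a))$ supplies the $m=M(f,C)$ liftings required by Theorem \ref{th3.1}, and the conclusion is assembled from these ingredients. The one place where you overreach is the claim that $M_{q, K_{I,q}(\cdot,f)}(\Gamma_E)$ \emph{equals} the weighted capacity ``exactly as in the unweighted Lemma \ref{lem2.2}'': that equality is neither needed nor established, and the hard direction of such a weighted capacity--modulus identity is not obvious. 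The paper only proves, and only uses, the one-sided inequality $M_{q, K_{I,q}(\cdot,f)}(\Gamma_E)\le {\rm cap}_{q, K_{I,q}(\cdot,f)}\,E$, obtained by checking that $\rho(x)=|\nabla u(x)|$ is admissible for $\Gamma_E$ for every test function $u$ (using that $u\in C_0^{\infty}(A)$ is locally Lipschitz, so $u\circ\gamma^{\,0}$ is absolutely continuous along rectifiable curves); you should replace your appeal to a full weighted equivalence by this elementary admissibility argument.
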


\medskip
\begin{proof} Since   $E=(A,C)$  is  a condenser in $D,$ then $f(E)=\left(f(A), f(C)\right)$
is a condenser in $f(D).$ Let $\Gamma_E$ and $\Gamma_{f(E)}$ be
curve families such as in Lemma \ref{lem2.2}. Set $m=M(f,C).$ Let
$\beta:[a, b)\rightarrow f(A)$ be a curve in $\Gamma_{f(E)}.$ Then
$C\cap f^{\,-1}\left(\beta(a)\right)$ contains points
$x_1,\ldots,x_k$ such that
$$m^{\,\prime}\quad=\quad\sum\limits_{l=1}^k i(x_l, f)\quad\ge\quad m\,.$$
By Theorem II.3.2 in \cite{Ri}, there is a maximal sequence of
$f|A$--liftings $\alpha_j:[a,c_j)\rightarrow D$ of $\beta,$ $1\le
j\le m^{\,\prime},$ starting at the points $x_1,\ldots, x_k.$ Then
each $\alpha_j$ belongs to $\Gamma_E.$ It follows that
$\Gamma=\Gamma_E$ and $\Gamma^{\,\prime}=\Gamma_{f(E)}$ satisfy the
Theorem \ref{th3.1}. Hence by Lemma \ref{lem2.2}
\begin{equation}\label{equa21}
{\rm cap}_q\,f(E)\quad \le \quad \frac{1}{M(f,C)}\qquad M_{q, K_{I,
q}(\cdot,\,f)}(\Gamma_E)\,.
\end{equation}
Finally, (\ref{equa20}) follows from (\ref{equa21}) because
%
$$M_{q, K_{I, q}(\cdot,\,f)}(\Gamma_E)\quad\le \quad {\rm cap}_{q,
K_{I, q}(\cdot,\,f)}\,E\,,$$
as it is easily seen by considering $\rho(x)=|\nabla u(x)|$ for a
given test function $u$ in ${\rm cap}_{q, K_{I, q}(\cdot,\,f)}\,E.$
In fact, let $\gamma\in \Gamma_E$ be a locally rectifiable curve,
$s$ be a natural parameter on $\gamma,$ $\gamma^{\,0}$ be a
normal representation of $\gamma$ and let $s_0\in (0, l(\gamma)),$
where $l(\gamma)$ denotes the length of the curve $\gamma.$ Using a
geometrical sense of the gradient, for the function $\rho(x)=|\nabla
u(x)|,$ we have
$$\int\limits_\gamma\rho(x)|dx|=\int\limits_\gamma|\nabla u(x)||dx|=
\int\limits_{0}^{l(\gamma)}|\nabla u(\gamma^{\,0}(s))|ds\ge
\int\limits_{0}^{l(\gamma)}\left|\frac{du(\gamma^{\,0}(s))}{ds}\right|ds\ge
$$
$$\ge \left|\int\limits_{0}^{s_0}\frac{du(\gamma^{\,0}(s))}{ds}\,ds\right|=\left|u(\gamma^{\,0}(s_0))
-u(\gamma^{\,0}(0))\right|\rightarrow
\left|u(\gamma^{\,0}(l(\gamma)))-u(\gamma^{\,0}(0))\right|=1$$
as $s_0\rightarrow l(\gamma).$ Here we have used that
$r(s)=u(\gamma^{\,0}(s))$ is absolutely continuous by parameter $s$
for every rectifiable closed curve $\gamma$ in $D$ because a
function $u\in C_0^{\,\infty}(A)$ is locally Lipschitzian. Thus,
$\rho(x)=|\nabla u(x)|\in {\rm adm}\,\Gamma_E,$ and we have
the desired conclusion (\ref{equa20}).
\end{proof}$\Box$


{\bf \noindent Ruslan Salimov and Evgeny Sevost'yanov} \\
Institute of Applied Mathematics and Mechanics,\\
National Academy of Sciences of Ukraine, \\
74 Roze Luxemburg str., 83114 Donetsk, UKRAINE \\
Phone: +38 -- (062) -- 3110145, \\
Email: ruslan623@yandex.ru, esevostyanov2009@mail.ru
\end{document}